\newcommand{\later}[1]{}
\newcommand{\old}[1]{}
\newtheorem{theorem}{Theorem}
\newtheorem{definition}[theorem]{Definition}
\newtheorem{lemma}[theorem]{Lemma}
\newtheorem{claim}[theorem]{Claim}
\crefname{claim}{claim}{claims}
\newtheorem{proposition}[theorem]{Proposition}
\newtheorem{corollary}[theorem]{Corollary}
\newtheorem{conjecture}[theorem]{Conjecture}
\newcommand{\eps}{\varepsilon}
\newcommand{\NN}{\mathbb{N}} 
\newcommand{\ZZ}{\mathbb{Z}} 
\newcommand{\RR}{\mathbb{R}} 
\tikzstyle{vtx} = [circle, fill, inner sep=0.7]
\title{The number of regular simplices in higher dimensions}
\author{Felix Christian Clemen}
\address{Felix Christian Clemen \newline Department of Mathematics and Statistics, University of Victoria, Victoria, B.C., Canada}
\email{fclemen@uvic.ca}
\author{Adrian Dumitrescu}
\address{Adrian Dumitrescu \newline Algoresearch L.L.C., Milwaukee, WI, USA \newline Alfr\'ed R\'enyi Institute of Mathematics, Budapest, Hungary \newline Research Institute of the University of Bucharest, Bucharest, Romania}
\email{ad.dumitrescu@algoresearch.org}
\author{Dingyuan Liu}
\address{Dingyuan Liu \newline Institute of Algebra and Geometry, Karlsruhe Institute of Technology, Karlsruhe, Germany}
\email{liu@mathe.berlin}
\date{}
\begin{document}

\maketitle

\begin{abstract}
We study the extremal function $S^k_d(n)$, defined as the maximum number of regular $(k-1)$-simplices spanned by $n$ points in $\mathbb{R}^d$. For any fixed $d\geq2k\geq6$, we determine the asymptotic behavior of $S^k_d(n)$ up to a lower-order term. In particular, when $k=3$, we determine the exact value of $S^3_d(n)$, for all even dimensions $d\geq6$ and sufficiently large $n$. This resolves a conjecture of Erd\H{o}s in a stronger form. The proof leverages techniques from hypergraph Tur\'an theory and linear algebra.
\end{abstract} 

{\scriptsize\textbf{MSC 2020:} 52C10, 05D99}

\section{Introduction}
\label{sec:intro}
In 1946, Erd\H{o}s~\cite{Er46} posed a deceptively simple question: What is the maximum number of times the unit distance can occur among $n$ points in the plane? He proved a lower bound of $n^{1+\Omega(1/\log\log{n})}$ and conjectured that his lower bound is essentially optimal. See~\cite{Sz16} for a historical account of the conjecture. Contrary to long-standing belief, this conjecture was recently disproved by an internal model of OpenAI~\cite{AI26}. By generalizing Erd\H{o}s's original construction, the AI model established a lower bound of the form $n^{1+c}$, where $c>0$ is an absolute constant. See also~\cite{NA26} for a simplified proof and further remarks. Shortly thereafter, Sawin~\cite{WS26} improved the constant to $c=0.014$. The pursuit of this problem has inspired substantial research and also led to the formulation of numerous related questions in combinatorial geometry, all revolving around a common theme: determining the maximum number of certain geometric configurations that can be found among $n$ points in Euclidean space. In this paper, we address one such problem.

The following question was raised repeatedly by Erd\H{o}s and Purdy~\cite{Er75,EP71,EP75,EP76}: Given $d,n\in\NN$, what is the maximum number $T_d(n)$ of equilateral triangles that can be determined by $n$ points in $\RR^d$? There is a substantial body of work addressing this problem and its generalizations, see, e.g.,~\cite{AF20,AAPS07,AS02,ATT97,AS05,EP75,Pu88,SSZ13}. \'Abrego and Fern\'andez-Merchant~\cite{AF20} proved that $T_2(n)=\Theta(n^2)$, which also yields the lower bound $T_4(n)\geq T_3(n)=\Omega(n^2)$. For $d=5$, the same authors~\cite{AF20} established a better lower bound $T_5(n)=\Omega(n^{7/3})$. On the other hand, Sharir, Sheffer, and Zahl~\cite{SSZ13} proved that $T_3(n)=O(n^{15/7})$, and Agarwal, Apfelbaum, Purdy, and Sharir~\cite{AAPS07} showed that $T_4(n)=O(n^{12/5})$ and $T_5(n)=O(n^{8/3})$.

The first non-degenerate case, that is, when $\liminf_{n\to\infty}T_{d}(n)/\binom{n}{3}>0$, appears for $d=6$. Erd\H{o}s and Purdy~\cite{EP75} showed that $T_6(n)\geq n^3/27-O(n^2)$ by placing $n$ points evenly on three pairwise orthogonal circles. Regarding the upper bound, Erd\H{o}s and Purdy~\cite{EP75} said ``It would be interesting even to show $T_6(n)\leq(1/6-\eps)n^3$.'' In a later paper~\cite{Er94}, Erd\H{o}s proposed the following more precise conjecture.
\begin{conjecture}[Erd\H{o}s~\cite{Er94}]
\label{conj:triangle}
Any $n$ points in $\RR^6$ can span at most $n^3/27+o(n^3)$ equilateral triangles, i.e., $T_6(n)\leq n^3/27+o(n^3)$.
\end{conjecture}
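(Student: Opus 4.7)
The plan is to recast the problem as a hypergraph extremal question. Let $P \subset \RR^6$ be an $n$-point set and let $H$ be the $3$-uniform hypergraph on $P$ whose edges are the equilateral triangles spanned by $P$. The lower bound $T_6(n) \ge n^3/27 - O(n^2)$ is realized by placing $n/3$ points on each of three unit circles in pairwise orthogonal copies of $\RR^2$ inside $\RR^2 \oplus \RR^2 \oplus \RR^2 = \RR^6$: because the component subspaces are orthogonal, every triple with one point from each circle has all three pairwise distances equal to $\sqrt{2}$, so the corresponding $H$ is the complete $3$-partite $3$-graph $K^{(3)}_{n/3,n/3,n/3}$. The goal is a stability-style upper bound matching this construction.

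My first step is to isolate, via linear algebra on Gram matrices of centered point tuples from $P$, a small forbidden sub-$3$-graph $F$ --- \emph{not} $K_4^{(3)}$ itself, since regular tetrahedra do exist in $\RR^6$, and in fact any $K_4^{(3)-}$ sitting inside $H$ is automatically promoted to a $K_4^{(3)}$ because three equilateral triangles on four vertices force the common edge length on all six pairs. The natural candidates for $F$ are small configurations in which the system of equilateral-triangle conditions on inner products $\langle p_i, p_j\rangle$ becomes overdetermined relative to the rank bound $\mathrm{rank}(G) \le 6$ on the Gram matrix, so that any embedding of $F$ into $H$ must live on a positive-codimension subvariety; a double count using the algebraic constraint then shows that $H$ contains at most $o(n^{|V(F)|})$ copies of $F$.

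Next, I would apply the hypergraph removal lemma to delete $o(n^3)$ edges from $H$ and obtain an $F$-free subhypergraph $H'$. The heart of the argument is then a matching Tur\'an-type theorem: every $F$-free $3$-graph on $n$ vertices has at most $(n/3)^3 + o(n^3)$ edges, with near-equality only for approximately balanced complete $3$-partite $3$-graphs. I would establish this via Zykov-style symmetrization or a flag algebra computation, pinning the Tur\'an density of $F$ at exactly $2/9$ together with stability around the balanced tripartite extremizer. Combining the two gives $|E(H)| \le (n/3)^3 + o(n^3) = n^3/27 + o(n^3)$.

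The principal obstacle is identifying the correct $F$. It must be simultaneously \emph{geometrically rigid} in $\RR^6$ (so that $F$-configurations of equilateral triangles force algebraic dependencies that violate the rank-$6$ constraint on the Gram matrix) and \emph{combinatorially sharp} (so that the Tur\'an density of $F$-free $3$-graphs is exactly $2/9$, with the balanced tripartite hypergraph as the unique approximate extremizer). The delicate interplay between the $\RR^6$ rank constraint on one side and the combinatorial density $2/9$ on the other --- together with carefully combining the error from the removal lemma and the error from the stability step into a single $o(n^3)$ term --- is where I expect the bulk of the technical work to lie.
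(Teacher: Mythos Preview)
Your strategy is exactly the paper's; the only missing ingredient is the choice of $F$, and the paper supplies it. Take $F=H^{(3)}_{4}(3)$, the $3$-blowup of the $3$-graph $H^{(3)}_{4}$ obtained from $K_4$ by adjoining one new vertex to each edge. The blowup is the crucial device: for every pair $\{i,j\}\subseteq[4]$ of ``core'' vertex-classes $V_i,V_j$ (each of size $3$) there is an auxiliary class $W_{ij}$ of size $3$ such that every triple in $V_i\times V_j\times W_{ij}$ is an equilateral triangle; an elementary inner-product computation (precisely your Gram-matrix idea) then forces $\mathrm{Aff}(V_i)\perp\mathrm{Aff}(V_j)$ with $\dim\mathrm{Aff}(V_i)\ge 2$ for each $i$. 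Four pairwise orthogonal affine $2$-planes require $8$ ambient dimensions, so $H^{(3)}_{4}(3)$ has \emph{zero} copies in the equilateral-triangle hypergraph over $\RR^6$ --- not merely $o(n^{|V(F)|})$ --- and the removal-lemma step in your outline is unnecessary for the asymptotic bound. On the combinatorial side you do not need a new symmetrization or flag-algebra argument either: Mubayi's theorem already gives $\mathrm{ex}(n,H^{(3)}_{4})=(n/3)^3+o(n^3)$, and since the Tur\'an density is invariant under fixed blowups, the same holds for $H^{(3)}_{4}(3)$. This closes both halves of what you call the ``principal obstacle'' simultaneously.
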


We prove this conjecture in a more general form. For an integer $k\geq3$, a regular $(k-1)$-simplex is a set of $k$ vertices that are equidistant from one another. Let $S^k_d(n)$ denote the maximum number of regular $(k-1)$-simplices spanned by $n$ points in $\RR^d$. Our first theorem determines the asymptotic behavior of $S^k_d(n)$ for all fixed $d\geq2k\geq6$.

\begin{theorem}
\label{simplex}
Let $d\geq2k\geq6$ be fixed integers and let $r=\lfloor{d/2}\rfloor$. Then
\[S^k_{d}(n)=\binom{r}{k}\left(\frac{n}{r}\right)^k+o(n^k).\]
\end{theorem}

Since $T_d(n)=S^3_d(n)$,~\Cref{simplex} implies~\Cref{conj:triangle} upon taking $d=6$ and $k=3$. Our method also yields a stronger version of~\Cref{conj:triangle}. That is, we are able to determine the exact value of $T_d(n)$ for every even integer $d\geq6$ and sufficiently large $n$.

\begin{theorem}
\label{triangle_even}
Let $r\geq3$ be a fixed integer. Let $n\in\NN$ be sufficiently large and $p$ be the remainder of $n$ divided by $2r$. Then
\[T_{2r}(n)=\sum_{1\leq i<j<k\leq r}n_in_jn_k+\sum_{i\in[r]}\left((n_i-\mathds{1}_{n_i\notin4\ZZ})(n-n_i)+\frac{n_i-p_i}{3}+\mathds{1}_{p_i>8}(p_i-8)\right),\]
where $(n_1,\dots,n_r)$ with $\sum_{i=1}^rn_i=n$ is chosen to be
\[
(n_1,\dots,n_r)=
\begin{cases}
(\underbrace{\lfloor{n/r}\rfloor,\dots,\lfloor{n/r}\rfloor}_{r-p/2},\underbrace{\lfloor{n/r}\rfloor+2,\dots,\lfloor{n/r}\rfloor+2}_{p/2}) & \text{if }p\in[0,r)\cap2\ZZ,\\
(\underbrace{\lfloor{n/r}\rfloor,\dots,\lfloor{n/r}\rfloor}_{r-(p+1)/2},\lfloor{n/r}\rfloor+1,\underbrace{\lfloor{n/r}\rfloor+2,\dots,\lfloor{n/r}\rfloor+2}_{(p-1)/2}) & \text{if }p\in[0,r)\setminus2\ZZ,\\
(\underbrace{\lfloor{n/r}\rfloor-1,\dots,\lfloor{n/r}\rfloor-1}_{r-p/2},\underbrace{\lfloor{n/r}\rfloor+1,\dots,\lfloor{n/r}\rfloor+1}_{p/2}) & \text{if }p\in[r,2r)\cap2\ZZ,\\
(\underbrace{\lfloor{n/r}\rfloor-1,\dots,\lfloor{n/r}\rfloor-1}_{r-(p+1)/2},\lfloor{n/r}\rfloor,\underbrace{\lfloor{n/r}\rfloor+1,\dots,\lfloor{n/r}\rfloor+1}_{(p-1)/2}) & \text{if }p\in[r,2r)\setminus2\ZZ,
\end{cases}
\]
and $p_i$ is the remainder of $n_i$ divided by $12$ for each $i\in[r]$.
\end{theorem}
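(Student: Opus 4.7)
The plan is to prove matching lower and upper bounds on $T_{2r}(n)$. The lower bound comes from an explicit construction, while the upper bound proceeds by first establishing an approximate hypergraph-Tur\'an-type bound, then a stability-and-rigidity step that forces the extremal structure, and finally a finite combinatorial optimisation that pins down the exact constants.

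For the \emph{construction}, I would fix $r$ pairwise orthogonal $2$-planes $P_1,\dots,P_r$ through the origin of $\RR^{2r}$ and place $n_i$ points on the unit circle of $P_i$. Since the planes are orthogonal, any two points from distinct circles are at distance $\sqrt{2}$, so every rainbow triple formed by one point from each of three distinct parts automatically spans an equilateral triangle, contributing the main term $\sum_{1\le i<j<k\le r} n_i n_j n_k$. The $n_i$ points inside each circle are then arranged to simultaneously optimise (i) the number of perpendicular chord-pairs, since each such pair in $P_i$ together with any point on any other circle spans an equilateral triangle of side $\sqrt{2}$, contributing $(n_i-\mathds{1}_{n_i\notin 4\ZZ})(n-n_i)$, and (ii) the number of inscribed equilateral triangles consistent with that choice, contributing $(n_i-p_i)/3+\mathds{1}_{p_i>8}(p_i-8)$. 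Concretely one places $\lfloor n_i/12\rfloor$ mutually rotated regular $12$-gons on the circle (each decomposing into three perpendicular quadruples and four inscribed equilateral triangles), plus $p_i$ residual points chosen to preserve as many perpendicular pairs and inscribed triangles as the residue class allows; since each perpendicular pair is worth $(n-n_i)\gg 1$ triangles, the optimum prioritises perpendicular pairs, which already explains why only $\mathds{1}_{p_i>8}(p_i-8)$ extra inscribed triangles can survive. A discrete optimisation of $\sum_{i<j<k} n_i n_j n_k$ over partitions $(n_1,\dots,n_r)$ of $n$ with near-balanced parts then singles out the tuple stated in the theorem.

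For the \emph{upper bound}, the first layer is an asymptotic statement $T_{2r}(n)\le (1+o(1))\binom{r}{3}(n/r)^3$: one regards the equilateral triples as edges of a $3$-uniform hypergraph $H$ on $n$ vertices and combines a Gram-matrix / linear-algebra analysis (which restricts the possible dense substructures of $H$, e.g., by capping clique size and forbidding certain configurations) with a supersaturation / removal argument to force $H$ to be essentially $r$-partite. The second layer is a stability statement: any configuration within $o(n^3)$ of the extremal count, after deleting $o(n)$ exceptional points, decomposes into $r$ clusters lying on $r$ pairwise orthogonal unit circles. This is proved by bootstrapping hypergraph Tur\'an stability with the \emph{rigidity} of equilateral configurations in $\RR^{2r}$: once a regular $(r-1)$-simplex of rainbow triangles is fixed, the ambient orthogonal frame is essentially determined and every remaining point is forced onto one of the $r$ circles. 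The third layer is a finite optimisation inside that structure: given $r$ orthogonal unit circles carrying $n_1,\dots,n_r$ points, the count splits as the rainbow term plus the within-plane counts, and the sharp maxima of perpendicular chord-pairs and inscribed equilateral triangles on a single circle (with the integrality constraints modulo $4$ and $12$ producing precisely the exceptional terms) yield the theorem's formula; optimising over $(n_i)$ gives the displayed distribution. The \emph{main obstacle} I expect is the stability-plus-rigidity layer: upgrading the asymptotic Tur\'an bound to an \emph{exact} formula requires ruling out that any atypical point contributes even a single extra triangle, and doing this robustly in the geometric setting using a clean Gram-matrix framework is the crux of the proof.
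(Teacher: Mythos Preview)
Your proposal is essentially the paper's proof: Lenz construction on $r$ orthogonal unit circles for the lower bound; for the upper bound, a hypergraph-Tur\'an argument (the paper forbids the $3$-blowup $H^{(3)}_{r+1}(3)$ of Mubayi's expanded clique, then invokes Pikhurko's stability via the removal lemma), followed by geometric rigidity forcing the orthogonal-circle structure, a vertex-swap to kill exceptional points, and finally the discrete optimisation over $(n_1,\dots,n_r)$. One small correction: a single rainbow simplex does not determine the orthogonal frame---you need \emph{three} points in each part (hence the $3$-blowup) so that the equidistance relations force the spanned affine planes to be pairwise orthogonal; this is the paper's Lemma~\ref{witzig} and is exactly the ``linear-algebra'' input you allude to.
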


In particular, when $n$ divides nicely,~\Cref{triangle_even} gives a simple and compact formula for $T_{2r}(n)$.
\begin{corollary}
\label{good_divisibility}
Let $r\geq3$ be a fixed integer. If $n\in\NN$ is sufficiently large and divisible by $12r$, then
\[T_{2r}(n)=\binom{r}{3}\left(\frac{n}{r}\right)^3+\frac{(r-1)n^2}{r}+\frac{n}{3}.\]
\end{corollary}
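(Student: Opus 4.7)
The plan is to deduce the corollary directly from Theorem~\ref{triangle_even} by substituting the hypothesis $12r \mid n$ into the piecewise formula and simplifying. Since $2r \mid n$, the remainder $p$ of $n$ divided by $2r$ is $0$, which lands us in the first case of the piecewise definition with $p/2 = 0$ blocks of size $\lfloor n/r\rfloor+2$. Hence $(n_1,\dots,n_r) = (n/r,\dots,n/r)$, and each coordinate $n_i = n/r$ is an integer divisible by $12$ because $12r \mid n$.

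The next step is to evaluate the correction terms. Since $n_i$ is divisible by $4$, the indicator $\mathds{1}_{n_i\notin 4\ZZ}$ vanishes; since $n_i$ is divisible by $12$, its remainder $p_i$ modulo $12$ is $0$, which makes $\mathds{1}_{p_i>8}$ vanish as well and reduces $(n_i-p_i)/3$ to $n_i/3 = n/(3r)$. Therefore the bracketed summand in Theorem~\ref{triangle_even} collapses to
\[n_i(n-n_i) + \frac{n_i}{3} = \frac{n}{r}\cdot\frac{(r-1)n}{r} + \frac{n}{3r}.\]

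Finally, I would add up the contributions. The triple sum contributes $\binom{r}{3}(n/r)^3$ since all $n_i$ are equal. The second sum, taken over $i\in[r]$, contributes $r\bigl((r-1)n^2/r^2 + n/(3r)\bigr) = (r-1)n^2/r + n/3$. Combining these two pieces yields exactly the claimed expression for $T_{2r}(n)$. There is no substantive obstacle here: the entire corollary is a matter of verifying that all indicator and remainder corrections in Theorem~\ref{triangle_even} vanish under the strong divisibility hypothesis $12r \mid n$, after which only the ``clean'' leading and lower-order terms survive.
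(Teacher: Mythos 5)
Your proposal is correct and matches the paper's (implicit) derivation exactly: the corollary is stated as an immediate specialization of Theorem~\ref{triangle_even}, and your substitution $p=0$, $n_i=n/r\in12\ZZ$, followed by the observation that all indicator and remainder corrections vanish, is precisely the intended computation. The arithmetic checks out.
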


Our next result is the counterpart of~\Cref{triangle_even} for $k\geq4$. Throughout this paper, whenever we write a function, we assume that all its variables are non-negative integers.
\begin{theorem}
\label{simplex_even}
Let $r\geq k\geq4$ be fixed integers and $n\in\NN$ be sufficiently large. Then
\[S^k_{2r}(n)=\max_{n_1+\dots+n_r=n}f_k(n_1,\dots,n_r),\]
where
\[f_k(n_1,\dots,n_r)=\left(\sum_{\mathcal{I}\in\binom{[r]}{k}}\prod_{i\in\mathcal{I}}n_i\right)
+\left(\sum_{1\leq\ell\leq\lfloor{k/2}\rfloor}\sum_{\mathcal{J}\in\binom{[r]}{\ell}}\left(\prod_{j\in\mathcal{J}}(n_j-\mathds{1}_{n_j\notin4\ZZ})\right)\left(\sum_{\mathcal{I}\in\binom{[r]\setminus\mathcal{J}}{k-2\ell}}\prod_{i\in\mathcal{I}}n_i\right)\right).\]
\end{theorem}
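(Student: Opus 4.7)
I would decompose $\RR^{2r}=V_1\oplus\cdots\oplus V_r$ into $r$ mutually orthogonal 2-planes, and for any partition $n=n_1+\cdots+n_r$ place $n_i$ points in $V_i$ on a common circle of radius $R$, arranged to maximize the number of perpendicular pairs (pairs subtending a right angle at the origin of $V_i$). A direct planar count yields $n_i$ perpendicular pairs when $n_i\equiv 0\pmod 4$ (via $n_i/4$ inscribed squares) and $n_i-1$ otherwise. Any two points from distinct $V_i$'s are at distance $R\sqrt{2}$, and so is every within-plane perpendicular pair; hence every $k$-tuple using at most two vertices per plane (with within-plane pairs being perpendicular) forms a regular $(k-1)$-simplex of side $R\sqrt{2}$. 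Three mutually orthogonal vectors do not fit in $\RR^2$, so these exhaust the regular simplices of this side length. Stratifying by the number $\ell\in\{0,1,\ldots,\lfloor k/2\rfloor\}$ of planes contributing a pair yields exactly $f_k(n_1,\ldots,n_r)$ simplices, and optimizing over partitions gives the lower bound.

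\textbf{Upper bound (asymptotic structure).} For $P\subset\RR^{2r}$ with $|P|=n$, I would decompose $S^k(P)=\sum_{d>0}t_k(G_d)$, where $G_d$ is the distance-$d$ graph on $P$ and $t_k$ denotes its $k$-clique count. Each $G_d$ obeys the Lenz bound $|E(G_d)|\leq\binom{r}{2}(n/r)^2+O_r(n)$, and combined with Kruskal--Katona-type clique bounds together with the conservation $\sum_d|E(G_d)|=\binom{n}{2}$, this shows that $S^k(P)$ is dominated by a single ``critical'' distance $d^*$. A stability version of the Lenz bound then yields a partition $P=P_1\sqcup\cdots\sqcup P_r$ with $|P_i|=n_i$ and all cross-cluster pairs at distance exactly $d^*$, up to $o(n)$ exceptional points.

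\textbf{Upper bound (linear-algebraic rigidity).} For $p\in P_i$, $q\in P_j$ with $|p-q|=d^*$, the identity $p\cdot q=(|p|^2+|q|^2-d^{*2})/2$ implies $(p-p')\cdot(q-q')=0$ for any $p,p'\in P_i,\ q,q'\in P_j$, so $\mathrm{span}(P_i-P_i)\perp\mathrm{span}(P_j-P_j)$. Averaging instead over $q\in P_j$ gives $|p-c_j|^2=d^{*2}-\mathrm{Var}(P_j)$, placing $P_i$ on a sphere around each centroid $c_j$. Orthogonality forces $\sum_i\dim\mathrm{aff}(P_i)\leq 2r$, and an optimization---comparing intra-cluster distance-$d^*$ pair counts against the cost of shrinking other clusters' dimensions---forces $\dim\mathrm{aff}(P_i)=2$ for every $i$, recovering the orthogonal-2-planes structure of the construction. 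Within each 2-plane the distance-$d^*$ pair count is at most $n_i-\mathds{1}_{n_i\notin 4\ZZ}$, and a stratified $k$-clique count on $G_{d^*}$ (at most two vertices per part, since three mutually orthogonal vectors do not fit in $\RR^2$) yields $t_k(G_{d^*})\leq f_k(n_1,\ldots,n_r)$, hence $S^k(P)\leq\max_{n_1+\cdots+n_r=n}f_k(n_1,\ldots,n_r)$.

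\textbf{Main obstacle.} The scheme above delivers the asymptotic bound $S^k_{2r}(n)\leq(1+o(1))\max f_k$, whereas Theorem~\ref{simplex_even} asserts exact equality for all sufficiently large $n$. Closing this gap requires a sharp stability lemma---any configuration within an additive $o(n^{k-1})$ of the maximum must be exactly the orthogonal-planes construction up to isometry---together with a careful ruling-out of exotic near-extremal configurations: clusters in higher-dimensional affine subspaces (which admit more intra-cluster distance-$d^*$ pairs at the expense of shrinking other clusters), and the possibility of several ``almost-dominant'' side lengths each contributing lower-order simplices that might add up. This combined sharp-stability and exact-optimization step, sitting at the interface of hypergraph Tur\'an theory and the linear algebra of Gram matrices of equidistant point sets, is the technical heart of the proof and I expect it to be the main obstacle.
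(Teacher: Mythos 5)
Your lower bound is precisely the paper's Lenz construction, and your linear-algebraic rigidity step (orthogonality of $\mathrm{span}(P_i-P_i)$ and $\mathrm{span}(P_j-P_j)$ via the polarization identity, plus the concentric-spheres conclusion) is essentially the paper's \Cref{affine} and \Cref{witzig}. The genuine gap is in how you reach the $r$-partite structure. You propose to stratify by side length, bound each distance graph $G_d$ by the Lenz edge bound, and deduce from Kruskal--Katona together with $\sum_d|E(G_d)|=\binom{n}{2}$ that a single critical distance dominates. This step fails quantitatively. Kruskal--Katona bounds $t_k(G_d)$ by roughly $c_k|E(G_d)|^{k/2}$, and under the constraints $|E(G_d)|\leq\frac{r-1}{2r}n^2+O(n)$ and $\sum_d|E(G_d)|=\binom{n}{2}$ the sum $\sum_d t_k(G_d)$ is not forced to concentrate on one distance: a bounded number (about $r/(r-1)$, hence at least two) of distances can each sit at the edge cap. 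Moreover, even for a single distance the Kruskal--Katona clique bound overshoots $\binom{r}{k}(n/r)^k$ by a constant factor (roughly $6$ already for $r=k=4$), and one cannot do better from the edge count alone, since a distance graph in $\RR^{2r}$ is not $K_{r+1}$-free (a regular $2r$-simplex realizes $K_{2r+1}$ at a single distance). So neither the single-distance domination nor the correct leading constant follows from this route; bounding $t_k(G_{d^*})$ with the right constant is essentially the whole problem restricted to one side length.

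The paper sidesteps this by forming one $k$-uniform hypergraph whose hyperedges are all regular $(k-1)$-simplices of every side length simultaneously, and showing (\Cref{Hfree}) that it contains no blowup $H^{(k)}_{r+1}(3)$ of the expanded clique: such a copy would force $r+1$ pairwise orthogonal affine spaces, each of dimension at least $2$, inside $\RR^{2r}$. Mubayi's Tur\'an theorem (\Cref{turan}) combined with Pikhurko's stability and the removal lemma (\Cref{new:stability}) then delivers the near-$r$-partite structure in one stroke, and the fact that the resulting circles share a common radius makes the single dominant side length $\sqrt{2}\gamma$ a consequence rather than a hypothesis. Finally, exactness is obtained not by the ``sharp stability lemma'' you describe but by a local exchange argument (\Cref{ellig}): a point off the circles lies in at most $\binom{r-2}{k-1}(n/r)^{k-1}+o(n^{k-1})$ regular simplices, while a fresh point placed on $C_r$ lies in at least $\binom{r-1}{k-1}(n/r)^{k-1}-o(n^{k-1})$ of them, so extremality forces $A_0=\emptyset$, after which the count is exact. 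You correctly identified that closing the exactness gap is the crux, but the working mechanism is this swap, not rigidity of all near-extremal configurations up to isometry.
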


One can readily verify that $\max_{n_1+\dots+n_r=n}f_k(n_1,\dots,n_r)$ must be attained by some $(n_1,\dots,n_r)$ satisfying $|n_i-n/r|=O(1)$ for all $i\in[r]$. However, we make no attempt here to identify the exact maximizer, as it appears to depend intricately on the values of $r$ and $k$. Combining~\Cref{triangle_even,simplex_even}, we determine $S^k_{2r}(n)$ up to a constant in the lower-order term for all $r\geq k\geq 3$. Note that this improves upon~\Cref{simplex} in the even-dimensional case.
\begin{corollary}
\label{error_term}
Let $r\geq k\geq 3$ be fixed integers. Then
\[S^k_{2r}(n)=\binom{r}{k}\left(\frac{n}{r}\right)^k+\Theta(n^{k-1}).\]
\end{corollary}

The main tool for obtaining the precise estimates in~\Cref{triangle_even,simplex_even} is the following stability result, which describes the structure of any almost-extremal point set. We say that two circles are orthogonal if the affine spaces spanned by them are orthogonal.
\begin{theorem}
\label{simplex_stability}
Let $r\geq k\geq 3$ be fixed integers. If a set $X\subseteq\RR^{2r}$ of $n$ points spans $S^k_{2r}(n)-o(n^k)$ regular $(k-1)$-simplices, then one can find a partition $X=A_0\dot\cup A_1\dot\cup\dots\dot\cup A_r$ with $|A_0|=o(n)$ and $|A_i|=n/r-o(n)$ for every $i\in[r]$, and pairwise orthogonal circles $C_1,\dots,C_r\subseteq\RR^{2r}$, such that
\begin{enumerate}
    \item $A_i\subseteq C_i$ for all $i\in[r]$;
    \item $C_1,\dots,C_r$ have the same center and the same radius.
\end{enumerate}
\end{theorem}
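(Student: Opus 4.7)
The plan is to argue in three stages, progressively upgrading combinatorial control to geometric rigidity. Let $H$ be the $k$-uniform hypergraph on $X$ whose edges are the regular $(k-1)$-simplices, so $|E(H)| \ge S^k_d(n) - o(n^k) = \binom{r}{k}(n/r)^k - o(n^k)$. \textbf{Stage 1 (Nearly partite structure):} I would first show that almost all hyperedges of $H$ share a common edge length $\lambda$: for each $\mu$, the number of regular $(k-1)$-simplices of edge length $\mu$ in $\RR^d$ is itself maximized by an orthogonal-circle configuration of radius $\mu/\sqrt{2}$, and if two distinct values $\mu_1 \ne \mu_2$ each contributed $\Omega(n^k)$ simplices then $X$ would have to be simultaneously close, in the $o(n)$-removal sense, to two incompatible such configurations. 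Restricting $H$ to the dominant length $\lambda$ retains $S^k_d(n) - o(n^k)$ edges. Since the maximum number of transversal $k$-sets in a balanced $r$-partition of $[n]$ is exactly $\binom{r}{k}(n/r)^k$, an Erd\H{o}s--Simonovits-style stability result for this $r$-partite Tur\'an bound, followed by a standard degree-cleaning of vertices with anomalously few simplices, yields a partition $X = A_0 \cup B_1 \cup \cdots \cup B_r$ with $|A_0| = o(n)$, $|B_i| = n/r + o(n)$ for each $i$, and only $o(n^k)$ non-transversal edges of $H$ at length $\lambda$.

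\textbf{Stage 2 (Orthogonality, common centre, common radius):} Fix $i \ne j$. The bipartite graph on $B_i \times B_j$ whose edges are the pairs at distance $\lambda$ has density $1 - o(1)$. Picking $a, a' \in B_i$ and $b, b' \in B_j$ whose four cross-distances all equal $\lambda$ and expanding the identities $|a-b|^2 = |a-b'|^2 = |a'-b|^2 = |a'-b'|^2 = \lambda^2$ yields
\[
\langle a - a', \, b - b' \rangle = 0.
\]
After moving another $o(n)$ exceptional vertices into $A_0$, this forces the linear spans $V_i := \mathrm{span}(B_i - B_i)$ to be pairwise orthogonal. The one-sided version $\langle a - a', b \rangle = \tfrac12(|a|^2 - |a'|^2)$ has a right-hand side independent of $b$, so the orthogonal projection $P_{V_i}(b)$ is constant over $b \in B_j$; by pairwise orthogonality of the subspaces $V_i$ there is a single point $c \in \RR^d$ whose projection onto each $V_i$ equals this constant. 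After translating $\RR^d$ so that $c = 0$, every $B_i$ lies on a sphere of some radius $R_i$ about the origin inside $V_i$, and substituting back into $|a-b|^2 = \lambda^2$ forces $R_i^2 + R_j^2 = \lambda^2$ for all $i \ne j$, hence a common radius $R = \lambda/\sqrt{2}$.

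\textbf{Stage 3 (Dimension count) and main obstacle:} Since $|B_i| = n/r - o(n) \to \infty$ and $B_i$ lies on a $(\dim V_i - 1)$-sphere, we need $\dim V_i \ge 2$; combined with pairwise orthogonality this gives $\sum_i \dim V_i \le d$, which forces $\dim V_i = 2$ for every $i$ when $d = 2r$ and exactly one $V_i$ of dimension $3$ (relabelled to be $V_r$) when $d = 2r+1$. Setting $C_i \subseteq V_i$ to be the sphere of radius $R$ centred at the origin yields all three required conclusions. The step I expect to be hardest is the partite stability of Stage 1, because the ceiling $\binom{r}{k}(n/r)^k$ for regular simplices in $\RR^d$ is itself a geometric theorem rather than a combinatorial Tur\'an statement, so one cannot simply quote an existing hypergraph stability result as a black box; the stability must be extracted together with, or on top of, the sharp upper bound. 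A secondary delicacy in Stage 2 is avoiding an accumulation of $o(n)$-cleanups across the $\binom{r}{2}$ pairs $(i,j)$ that would swamp the partition, which is handled by performing all cleanups in a single pass after fixing the global partition from Stage 1.
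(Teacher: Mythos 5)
Your overall architecture (hypergraph of regular simplices, a stability step yielding an almost $r$-partite structure, then the polarization identity $\langle a-a',b-b'\rangle=0$ to obtain orthogonal subspaces, a common centre, a common radius, and a dimension count) matches the paper's, and your Stages 2 and 3 are essentially the paper's lemmas on mutually equidistant point sets together with its claims on pairwise orthogonality and the single dimension-$3$ sphere. The genuine gap is exactly where you flag it: Stage 1. You correctly observe that the ceiling $\binom{r}{k}(n/r)^k$ is not literally a Tur\'an bound for a fixed forbidden hypergraph, but you stop there, whereas the whole point of the argument is the reduction that makes it one. The idea you are missing is this: let $H^{(k)}_{r+1}$ be the $k$-graph obtained from $K_{r+1}$ by enlarging each edge with $k-2$ new vertices; the hypergraph of regular $(k-1)$-simplices contains no copy of the $3$-blowup $H^{(k)}_{r+1}(3)$, because such a copy would produce $r+1$ triples of points that are pairwise mutually equidistant (through shared $(k-2)$-tuples), hence $r+1$ pairwise orthogonal affine subspaces each of dimension at least $2$ inside $\RR^d$ with $d<2(r+1)$. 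Once this is established, Mubayi's Tur\'an theorem for $H^{(k)}_{r+1}$, Pikhurko's stability theorem, and the hypergraph removal lemma \emph{can} be quoted as black boxes to produce the $r$-partite subhypergraph with $\binom{r}{k}(n/r)^k-o(n^k)$ edges. Without this (or an equivalent) reduction, Stage 1 has no proof.

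A secondary problem is your preliminary claim that almost all simplices share a common edge length $\lambda$. As sketched it is circular: you justify it by saying that two lengths each contributing $\Omega(n^k)$ simplices would force $X$ to be close to two incompatible extremal configurations, but that inference is precisely the stability statement being proved. It is also unnecessary: the paper never fixes an edge length at the outset. If every $k$-tuple in $A_1\times\cdots\times A_k$ (with $|A_i|\geq3$) spans a regular simplex, then since $k\geq3$ any two such simplices sharing $k-1$ vertices have equal side length, so $A_i\leftrightarrow A_j$ for all $i\neq j$ and the orthogonality and common-centre conclusions follow without ever identifying $\lambda$ in advance. Finally, in Stage 3, for odd $d$ it is not forced that some $V_i$ has dimension $3$: all of them may have dimension $2$, in which case $C_r$ must be extended to a $2$-dimensional sphere using the free coordinate in order to obtain the statement as written.
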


The proof of~\Cref{simplex_stability} proceeds in two steps. First, we identify certain forbidden configurations in the point set using linear algebra. Then we apply techniques from hypergraph Tur\'an theory to analyze its structure.

The paper is organized as follows. In~\Cref{sec:construction} we introduce basic geometric notions and present the lower bound constructions for $S^k_d(n)$ needed for~\Cref{simplex,triangle_even,simplex_even}. In~\Cref{sec:preliminary}, we collect known lemmas and establish additional ones concerning hypergraph theory and the orthogonality of affine spaces. In~\Cref{sec:warmup} we give the proof of~\Cref{simplex}. In~\Cref{sec:stability} we establish~\Cref{simplex_stability}. In~\Cref{sec:number} we prove~\Cref{triangle_even,simplex_even} and derive~\Cref{error_term}.~\Cref{sec:concluding} contains concluding remarks and discusses further consequences of our method.

\section{Lower bounds: the Lenz construction}
\label{sec:construction}
In this section we present the Lenz construction of $n$ points in $\RR^d$, which was initially used to give a tight lower bound on the number of unit distances in $\RR^d$ for even $d\geq4$ (see, e.g.,~\cite{KS09}). Here we use refinements of this construction to obtain lower bounds on the number of regular simplices, i.e., the lower bounds for~\Cref{simplex,triangle_even,simplex_even}.

We begin by introducing some basic notions used in the construction and throughout the paper.

\subsection{Basic notions}
We shall give the definitions of affine spaces, spheres, and orthogonality.

We say that a non-empty subset $\mathcal{A}\subseteq\mathbb{R}^d$ is an affine space if there exists $x\in\RR^d$ and a linear subspace $\mathcal{S}\subseteq\mathbb{R}^d$, such that $\mathcal{A}=x+\mathcal{S}:=\{x+s:\,s\in\mathcal{S}\}$. Such an $\mathcal{S}$ is called the associated linear space of $\mathcal{A}$. Observe that if $\mathcal{A}$ is an affine space with associated linear space $\mathcal{S}$, then $\mathcal{S}=\mathcal{A}-a:=\{a'-a:\,a'\in\mathcal{A}\}$ for any $a\in\mathcal{A}$. Hence, every affine space has a unique associated linear space.

The dimension of an affine space $\mathcal{A}\subseteq\mathbb{R}^d$, denoted by $\dim\mathcal{A}$, is defined as the dimension of its associated linear space. Two affine spaces are said to be orthogonal if their associated linear spaces are orthogonal. It follows from the definition that if $\mathcal{A}_1,\dots,\mathcal{A}_r\subseteq\mathbb{R}^d$ are pairwise orthogonal affine spaces, then $\sum_{i\in[r]}\dim\mathcal{A}_i\leq \dim\RR^d=d$.

Given a non-empty subset $A\subseteq\RR^d$, let $\mathrm{Aff}(A)$ denote the affine space spanned by $A$. That is,
\[\mathrm{Aff}(A):=a+\mathrm{Span}(A-a)\quad\text{for any $a\in A$,}\]
where $\mathrm{Span}(A-a)$ represents the linear subspace of $\RR^d$ spanned by $A-a$. For further definitions concerning affine spaces we refer the reader to the book by Roman~\cite{RS05}.

For $d\in\mathbb{N}$, a $(d-1)$-dimensional sphere is defined as the set of all points in a $d$-dimensional affine space $\mathcal{A}$ at some distance $\gamma>0$ from a given point $c\in \mathcal{A}$. The point $c$ and the distance $\gamma$ are referred to as the center and the radius of the sphere, respectively. For example, a set of two points is a $0$-dimensional sphere, and a circle is a $1$-dimensional sphere. A sphere is called unit if its radius is equal to $1$. Two spheres are orthogonal if the affine spaces spanned by them are orthogonal.

\subsection{Lenz construction for even dimensions}
\label{lb_even}
Let $r\geq k\geq3$ be integers. For $n_1,\dots,n_r\in\ZZ_{\geq0}$, we define the function
\begin{align*}
f_k(n_1,\dots,n_r):=\left(\sum_{\mathcal{I}\in\binom{[r]}{k}}\prod_{i\in\mathcal{I}}n_i\right)
&+\left(\sum_{1\leq\ell\leq\lfloor{k/2}\rfloor}\sum_{\mathcal{J}\in\binom{[r]}{\ell}}\left(\prod_{j\in\mathcal{J}}(n_j-\mathds{1}_{n_j\notin4\ZZ})\right)\left(\sum_{\mathcal{I}\in\binom{[r]\setminus\mathcal{J}}{k-2\ell}}\prod_{i\in\mathcal{I}}n_i\right)\right)\\
&+\mathds{1}_{k=3}\left(\sum_{i\in[r]}\frac{n_i-p_i}{3}+\mathds{1}_{p_i>8}(p_i-8)\right),
\end{align*}
where $p_i$ is the remainder of $n_i$ divided by $12$ for each $i\in[r]$. We will show by construction that
\[S^k_{2r}(n)\geq\max_{n_1+\dots+n_r=n}f_k(n_1,\dots,n_r).\]

Let $C_1,\dots,C_r\subseteq\RR^{2r}$ be pairwise orthogonal unit circles with a common center. Such circles can always be picked. For example, we can let $C_i$ be a unit circle in \[\underbrace{\{0\}\times\dots\times\{0\}}_{2(i-1)}\times\RR^2\times\underbrace{\{0\}\times\dots\times\{0\}}_{2r-2i}\] centered at the origin $(0,\dots,0)$, for all $i\in[r]$.

Let $n_1,\dots,n_r\in\ZZ_{\geq0}$ with $n_1+\dots+n_r=n$ maximizing $f(n_1,\dots,n_r)$, and for each $i\in[r]$ let $p_i$ be the remainder of $n_i$ divided by $12$. Let $F=\{v_1,\dots,v_{12}\}$ denote a configuration of $12$ points arranged on a unit circle to form a regular dodecagon, where $v_1,\dots,v_{12}$ are labeled clockwise, see~\Cref{fig1} for an illustration. Note that every point in $F$ is contained in an inscribed square of side length $\sqrt{2}$ and an equilateral triangle of side length $\sqrt{3}$.

\begin{figure}[h!]
\centering
\begin{tikzpicture}[scale=2/5]

\def\radius{5}
\def\n{12}

    \node[circle, fill=black, inner sep=1pt] (P0) at ({\radius*cos(360*0/\n)}, {\radius*sin(360*0/\n)}) {};

    \node[circle, fill=black, inner sep=1pt] (P1) at ({\radius*cos(360*1/\n)}, {\radius*sin(360*1/\n)}) {};

    \node[circle, fill=black, inner sep=1pt] (P2) at ({\radius*cos(360*2/\n)}, {\radius*sin(360*2/\n)}) {};

    \node[circle, fill=black, inner sep=1pt] (P3) at ({\radius*cos(360*3/\n)}, {\radius*sin(360*3/\n)}) {};

    \node[circle, fill=black, inner sep=1pt] (P4) at ({\radius*cos(360*4/\n)}, {\radius*sin(360*4/\n)}) {};

    \node[circle, fill=black, inner sep=1pt] (P5) at ({\radius*cos(360*5/\n)}, {\radius*sin(360*5/\n)}) {};

    \node[circle, fill=black, inner sep=1pt] (P6) at ({\radius*cos(360*6/\n)}, {\radius*sin(360*6/\n)}) {};
    
    \node[circle, fill=black, inner sep=1pt] (P7) at ({\radius*cos(360*7/\n)}, {\radius*sin(360*7/\n)}) {};

    \node[circle, fill=black, inner sep=1pt] (P8) at ({\radius*cos(360*8/\n)}, {\radius*sin(360*8/\n)}) {};

    \node[circle, fill=black, inner sep=1pt] (P9) at ({\radius*cos(360*9/\n)}, {\radius*sin(360*9/\n)}) {};

    \node[circle, fill=black, inner sep=1pt] (P10) at ({\radius*cos(360*10/\n)}, {\radius*sin(360*10/\n)}) {};

    \node[circle, fill=black, inner sep=1pt] (P11) at ({\radius*cos(360*11/\n)}, {\radius*sin(360*11/\n)}) {};
    
\draw[thick, gray] (0,0) circle (\radius);

\draw[blue, opacity=0.5] (P0) -- (P3) -- (P6) -- (P9) -- (P0);

\draw[blue, opacity=0.5] (P1) -- (P4) -- (P7) -- (P10) -- (P1);

\draw[blue, opacity=0.5] (P2) -- (P5) -- (P8) -- (P11) -- (P2);

\draw[red, opacity=0.5] (P0) -- (P4) -- (P8) -- (P0);

\draw[red, opacity=0.5] (P1) -- (P5) -- (P9) -- (P1);

\draw[red, opacity=0.5] (P2) -- (P6) -- (P10) -- (P2);

\draw[red, opacity=0.5] (P3) -- (P7) -- (P11) -- (P3);

    \filldraw[black] (P0) circle (1pt) node[right] {$v_{3}$};

    \filldraw[black] (P1) circle (1pt) node[right] {$v_{2}$};

    \filldraw[black] (P2) circle (1pt) node[above] {$v_{1}$};

    \filldraw[black] (P3) circle (1pt) node[above] {$v_{12}$};

    \filldraw[black] (P4) circle (1pt) node[above] {$v_{11}$};

    \filldraw[black] (P5) circle (1pt) node[left] {$v_{10}$};

    \filldraw[black] (P6) circle (1pt) node[left] {$v_{9}$};

    \filldraw[black] (P7) circle (1pt) node[left] {$v_{8}$};

    \filldraw[black] (P8) circle (1pt) node[below] {$v_{7}$};

    \filldraw[black] (P9) circle (1pt) node[below] {$v_{6}$};

    \filldraw[black] (P10) circle (1pt) node[below] {$v_{5}$};

    \filldraw[black] (P11) circle (1pt) node[right] {$v_{4}$};
    
\end{tikzpicture}
\caption{The point configuration of $F$ on a unit circle, where the blue segments are of length $\sqrt{2}$ and the red segments are of length $\sqrt{3}$.}
\label{fig1}
\end{figure}
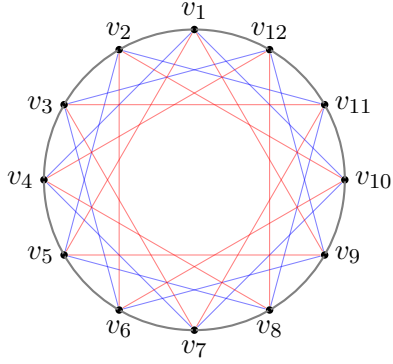

For any $i\in[r]$, we place $n_i$ points on the circle $C_i$ in the following way. Since $n_i-p_i$ is a multiple of $12$, we can place the first $n_i-p_i$ points on $C_i$ by taking $(n_i-p_i)/12$ pairwise disjoint copies of $F$. For the remaining $p_i$ points, we place them consecutively in a new copy of $F$, disjoint from the previous ones, following the order $v_1,v_4,v_7,v_{10},v_2,v_5,v_8,v_{11},v_3,v_6,v_9,v_{12}$.

We now count the number of regular $(k-1)$-simplices spanned by these $n$ points. There are three possible types of simplices, each corresponding to one of the three summands in $f_k(n_1,\dots,n_r)$:
\begin{enumerate}
    \item regular $(k-1)$-simplices $\Delta_1$ with $|\Delta_1\cap C_i|\leq1$ for all $i\in[r]$;
    \item regular $(k-1)$-simplices $\Delta_2$ with $|\Delta_2\cap C_i|=2$ for some $i\in[r]$ and $|\Delta_2\cap C_i|\leq2$ for others;
    \item regular $(k-1)$-simplices $\Delta_3$ containing at least three vertices on the same circle.
\end{enumerate}
Since $C_1,\dots,C_r$ are pairwise orthogonal unit circles, the distance between any two points from different circles is $\sqrt{2}$. Thus, $\Delta_1$ and $\Delta_2$ both have side length $\sqrt{2}$, while $\Delta_3$, if exists, has side length $\sqrt{3}$. The number of $\Delta_1$ is clearly $\sum_{\mathcal{I}\in\binom{[r]}{k}}\prod_{i\in\mathcal{I}}n_i$.

To enumerate the second-type simplices, we need to count the pairs of points on the same circle at distance $\sqrt{2}$ apart, referred to as ``good'' pairs. For any $i\in[r]$, observe that every good pair on $C_i$ consists of either two points from the first $n_i-p_i$ points, or two points from the remaining $p_i$ points. For the first $n_i-p_i$ points, since each of them is at distance $\sqrt{2}$ from two other points, we have $n_i-p_i$ good pairs. For the remaining $p_i$ points, due to the placement scheme, there are $p_i$ good pairs if $p_i$ is divisible by $4$, otherwise we have $p_i-1$ good pairs. Hence, the number of good pairs on $C_i$ is equal to $n_i-\mathds{1}_{p_i\notin4\ZZ}=n_i-\mathds{1}_{n_i\notin4\ZZ}$. Then for $1\leq\ell\leq\lfloor{k/2}\rfloor$, the number of $\Delta_2$ containing exactly $\ell$ good pairs, each pair on a different circle, is equal to
\[\sum_{\mathcal{J}\in\binom{[r]}{\ell}}\left(\prod_{j\in\mathcal{J}}(n_j-\mathds{1}_{n_j\notin4\ZZ})\right)\left(\sum_{\mathcal{I}\in\binom{[r]\setminus\mathcal{J}}{k-2\ell}}\prod_{i\in\mathcal{I}}n_i\right).\]
Accordingly, the total number of $\Delta_2$ is equal to
\[\sum_{1\leq\ell\leq\lfloor{k/2}\rfloor}\sum_{\mathcal{J}\in\binom{[r]}{\ell}}\left(\prod_{j\in\mathcal{J}}(n_j-\mathds{1}_{n_j\notin4\ZZ})\right)\left(\sum_{\mathcal{I}\in\binom{[r]\setminus\mathcal{J}}{k-2\ell}}\prod_{i\in\mathcal{I}}n_i\right).\]

For the number of $\Delta_3$, since there is no third-type simplex when $k\geq4$, we only consider the case when $k=3$. In this case, $\Delta_3$ is an equilateral triangle with three vertices on the same circle. For each circle $C_i$, any equilateral triangle on $C_i$ is formed by either three points from the first $n_i-p_i$ points or three points from the remaining $p_i$ points. Among the first $n_i-p_i$ points, since each of them is contained in one inscribed equilateral triangle, we have $(n_{i}-p_i)/3$ third-type simplices. Among the remaining $p_i$ points, there are $p_i-8$ third-type simplices if $p_i>8$, otherwise we have no third-type simplices. Therefore, the number of $\Delta_3$ in our construction is at least
\[\mathds{1}_{k=3}\left(\sum_{i\in[r]}\frac{n_i-p_i}{3}+\mathds{1}_{p_i>8}(p_i-8)\right).\]

In total, the number of regular $(k-1)$-simplices given by this construction is at least
\begin{align*}
\left(\sum_{\mathcal{I}\in\binom{[r]}{k}}\prod_{i\in\mathcal{I}}n_i\right)
&+\left(\sum_{1\leq\ell\leq\lfloor{k/2}\rfloor}\sum_{\mathcal{J}\in\binom{[r]}{\ell}}\left(\prod_{j\in\mathcal{J}}(n_j-\mathds{1}_{n_j\notin4\ZZ})\right)\left(\sum_{\mathcal{I}\in\binom{[r]\setminus\mathcal{J}}{k-2\ell}}\prod_{i\in\mathcal{I}}n_i\right)\right)\\
&+\mathds{1}_{k=3}\left(\sum_{i\in[r]}\frac{n_i-p_i}{3}+\mathds{1}_{p_i>8}(p_i-8)\right),
\end{align*}
consequently,
\begin{equation}
\label{lower_bound_even}
S^k_{2r}(n)\geq\max_{n_1+\dots+n_r=n}f_k(n_1,\dots,n_r).
\end{equation}

\subsection{Lenz construction for odd dimensions}
\label{lb_odd}
Let $r\geq k\geq3$ be integers. Let $C_1,\dots,C_{r}\subseteq\RR^{2r+1}$ be pairwise orthogonal unit spheres with a common center, where $C_1,\dots,C_{r-1}$ are circles and $C_r$ is a $2$-dimensional sphere. 

We put $n$ points on $C_1,\dots,C_r$ as evenly as possible, i.e., each $C_i$ receives $\lfloor{n/r}\rfloor$ or $\lceil{n/r}\rceil$ points. Since the distance between any two points on different spheres is $\sqrt{2}$, the number of regular $(k-1)$-simplices spanned by $k$ points from pairwise distinct spheres is $\binom{r}{k}\left(\frac{n}{r}\right)^k-O(n^{k-1})$.

Moreover, Erd\H{o}s, Hickerson, and Pach~\cite{EHP89} proved that there is a configuration of $m$ points on a $2$-dimensional unit sphere, such that the distance $\sqrt{2}$ occurs $\Omega(m^{4/3})$ times. The points on $C_r$ are arranged according to this configuration, which yields $\Omega(n^{4/3})$ pairs of points on $C_r$ at distance $\sqrt{2}$ apart. Note that every such pair forms a regular $(k-1)$-simplex with any $k-2$ points from pairwise distinct circles among $C_1,\dots,C_{r-1}$. Hence, there are at least $\Omega(n^{k-2/3})$ regular $(k-1)$-simplices spanned by $2$ points from $C_r$ and $k-2$ points from $C_1,\dots,C_{r-1}$. Overall we obtain that
\begin{equation}
\label{lower_bound_odd}
S^k_{2r+1}(n)\geq \binom{r}{k}\left(\frac{n}{r}\right)^k+\Omega(n^{k-2/3}).   
\end{equation}

\section{Preliminary results and lemmas}
\label{sec:preliminary}
\subsection{Tur\'an-type results for hypergraphs}
Given an integer $k\geq2$, a $k$-uniform hypergraph $H$ is an ordered pair $(V(H),E(H))$, where $V(H)$ is the set of vertices and $E(H)\subseteq\binom{V(H)}{k}$ is the set of hyperedges. Let $v(H)$ and $e(H)$ denote the numbers of vertices and hyperedges in $H$, respectively.

A $k$-uniform hypergraph $H$ is called $r$-partite for some integer $r\geq k$ if there exists a partition $V(H)=V_1\dot\cup V_2\dot\cup\dots\dot\cup V_r$, such that every hyperedge in $H$ has at most one vertex from each part $V_i$. 

For a $k$-uniform hypergraph $H$, the extremal number $\mathrm{ex}(n,H)$ is the largest number of hyperedges in an $n$-vertex $k$-uniform hypergraph containing no copy of $H$. Given an integer $t\geq1$, the $t$-blowup of $H$, denoted by $H(t)$, is a $k$-uniform hypergraph obtained from $H$ by replacing every vertex $v\in V(H)$ with an independent set $I_v$ of size $t$ and replacing every hyperedge $e=\{v_1,\dots,v_k\}\in E(H)$ with a complete $k$-partite $k$-uniform hypergraph spanned
by $I_{v_1}\dot\cup\dots\dot\cup I_{v_k}$. The independent set $I_v$ is referred to as the blowup of the vertex $v$.

\begin{definition}
Given integers $r\geq k\geq3$, $H^{(k)}_{r+1}$ is defined as a $k$-uniform hypergraph obtained from a complete graph $K_{r+1}$ by enlarging each edge with $k-2$ new vertices.
\end{definition}
Note that by definition $H^{(k)}_{r+1}$ contains $(r+1)+\binom{r+1}{2}(k-2)$ vertices and $\binom{r+1}{2}$ hyperedges.

\begin{lemma}[Mubayi~\cite{DM06}]
\label{turan}
For any fixed integers $r\geq k\geq3$, $\mathrm{ex}(n,H^{(k)}_{r+1})=\binom{r}{k}\left(\frac{n}{r}\right)^k+o(n^k)$.
\end{lemma}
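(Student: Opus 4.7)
The plan is a matching lower and upper bound, with the lower bound realized by the complete $r$-partite construction and the upper bound via a shadow argument that funnels the problem into Zykov's theorem on clique counts in $K_{r+1}$-free graphs.

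For the lower bound, I would partition $[n]$ into $r$ parts of sizes $\lfloor n/r\rfloor$ or $\lceil n/r\rceil$ and take as hyperedges all $k$-subsets meeting each part in at most one vertex. This yields a $k$-uniform hypergraph with $\binom{r}{k}(n/r)^k+O(n^{k-1})$ edges, and it is $H^{(k)}_{r+1}$-free because in any putative copy the $r+1$ core vertices would have to lie in pairwise distinct parts (any two of them share a hyperedge and therefore cannot lie in the same part), contradicting the existence of only $r$ parts.

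For the upper bound, let $H$ be a $k$-uniform hypergraph on $n$ vertices with $e(H)\geq \binom{r}{k}(n/r)^k+\varepsilon n^k$, and fix $\delta=\delta(\varepsilon,k,r)\ll \varepsilon$. Call a pair $\{u,v\}$ \emph{heavy} if its codegree $d(u,v)\geq \delta n^{k-2}$, and let $G$ be the graph of heavy pairs on $V(H)$. By a direct double-counting of codegrees, the number of hyperedges meeting at least one light pair is at most $\binom{n}{2}\cdot\delta n^{k-2}\leq \delta n^k/2$, so the number of \emph{clean} hyperedges (those whose $\binom{k}{2}$ pairs are all heavy) is at least $\binom{r}{k}(n/r)^k+(\varepsilon-\delta/2)n^k$. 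Each clean hyperedge induces a distinct $K_k$ in $G$, and by Zykov's theorem any $K_{r+1}$-free graph on $n$ vertices contains at most $\binom{r}{k}(n/r)^k+O(n^{k-1})$ copies of $K_k$; for $n$ large enough this forces $G$ to contain a $K_{r+1}$, say on vertices $u_0,\dots,u_r$. I then greedily expand this clique: for each of the $\binom{r+1}{2}$ pairs $\{u_i,u_j\}$, select a hyperedge of $H$ through $\{u_i,u_j\}$ whose remaining $k-2$ vertices avoid $\{u_0,\dots,u_r\}$ and all previously chosen expansion vertices. Since the pair is heavy it lies in at least $\delta n^{k-2}$ hyperedges, and only $O(n^{k-3})$ of them can hit the $O(1)$-sized forbidden set, so a valid choice always exists and the resulting configuration is a copy of $H^{(k)}_{r+1}$ in $H$.

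The main obstacle I anticipate is the clean deployment of Zykov's theorem together with the calibration of $\delta$: the slack introduced by the heavy-pair cleanup is of order $\delta n^k$, so $\delta$ must be chosen much smaller than $\varepsilon$ for the Zykov inequality to yield the desired contradiction once $n$ is large, yet it must remain positive so that the greedy extension step retains enough room to avoid the $O(1)$ forbidden vertices. An alternative route, closer to Mubayi's original argument, would bypass Zykov and work directly with the link/shadow structure of $H$, but the Zykov-based plan sketched above seems to me the cleanest and most transparent path once one isolates the two simple ingredients (codegree cleanup and the extremal count of $K_k$ in $K_{r+1}$-free graphs).
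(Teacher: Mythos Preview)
The paper does not prove this lemma; it is quoted from Mubayi~\cite{DM06} and used as a black box, so there is no in-paper proof to compare against.

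That said, your argument is correct and self-contained. The lower bound via the balanced complete $r$-partite $k$-graph is standard, and your justification that the $r+1$ core vertices must land in distinct parts is exactly right. For the upper bound, the codegree cleanup loses at most $\binom{n}{2}\delta n^{k-2}\le \tfrac{\delta}{2}n^k$ hyperedges, each surviving (clean) hyperedge is a $K_k$ in the heavy-pair graph $G$, and Zykov's theorem (the Tur\'an graph maximises the number of $K_k$'s among $K_{r+1}$-free graphs) then forces a $K_{r+1}$ in $G$ once $\delta<2\varepsilon$ and $n$ is large. The greedy extension step is fine: each heavy pair sits in at least $\delta n^{k-2}$ hyperedges, of which only $O(n^{k-3})$ touch the $O_{r,k}(1)$ already-used vertices, so disjoint $(k-2)$-tuples can always be chosen.

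This route differs from Mubayi's original argument in~\cite{DM06}, which works directly with the hypergraph (via links and an inductive comparison with the extremal construction) rather than passing through a $2$-graph clique count. Your reduction to Zykov is shorter and more conceptual for the asymptotic statement needed here; Mubayi's approach, on the other hand, yields finer structural information that feeds into the exact and stability results (such as Pikhurko's~\cite{OP13}, which the paper also invokes).
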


For any $k$-uniform hypergraph $H$ and $t\in\NN$, it is well-known (see, e.g.,~\cite{PK11}) that
\[\mathrm{ex}(n,H(t))=\mathrm{ex}(n,H)+o(n^k).\]
Thus, by~\Cref{turan} we have
\begin{equation}
\label{blowup}
    \mathrm{ex}(n,H^{(k)}_{r+1}(t))=\mathrm{ex}(n,H^{(k)}_{r+1})+o(n^k)=\binom{r}{k}\left(\frac{n}{r}\right)^k+o(n^k).
\end{equation}

Furthermore, we shall prove the following stability lemma for $H^{(k)}_{r+1}(t)$.

\begin{lemma}
\label{new:stability}
Let $r\geq k\geq3$ and $t\geq1$ be fixed integers. If $G$ is an $n$-vertex $H^{(k)}_{r+1}(t)$-free $k$-uniform hypergraph with $\mathrm{ex}(n,H^{(k)}_{r+1}(t))-o(n^k)$ hyperedges, then $G$ contains an $r$-partite subhypergraph with $\binom{r}{k}(n/r)^k-o(n^k)$ hyperedges.
\end{lemma}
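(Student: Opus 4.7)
The plan is to reduce to the non-blowup case~\Cref{turan} and then invoke the stability version of Mubayi's theorem for $H^{(k)}_{r+1}$. The first step is to show that $G$ contains only $o(n^{v(H^{(k)}_{r+1})})$ copies of $H^{(k)}_{r+1}$ itself. Indeed, if $G$ contained $\Omega(n^{v(H^{(k)}_{r+1})})$ such copies, then a standard blowup-from-copies argument---conditioning on the ``core'' $(r+1)$-tuple of vertices of each copy and iteratively applying a K\H{o}v\'ari--S\'os--Tur\'an / dependent random choice estimate on each of the $\binom{r+1}{2}$ ``added'' vertex classes---would yield an embedded copy of $H^{(k)}_{r+1}(t)$, contradicting the assumption.

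With this density bound in hand, I would invoke the hypergraph removal lemma (Gowers; Nagle--R\"odl--Schacht--Skokan; R\"odl--Skokan) to delete $o(n^k)$ hyperedges from $G$ and obtain a subhypergraph $G'$ that is $H^{(k)}_{r+1}$-free. By~\eqref{blowup}, $G'$ still has at least $\binom{r}{k}(n/r)^k - o(n^k)$ hyperedges, so it is a near-extremal $H^{(k)}_{r+1}$-free $k$-graph. Applying the stability companion to~\Cref{turan} proved in~\cite{DM06}, I obtain a partition $V(G) = V_1 \dot\cup \cdots \dot\cup V_r$ such that all but $o(n^k)$ hyperedges of $G'$ are $r$-partite with respect to this partition. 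Retaining only those $r$-partite hyperedges produces the required $r$-partite subhypergraph of $G$, with at least $\binom{r}{k}(n/r)^k - o(n^k)$ hyperedges.

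The main obstacle is the first step: promoting a positive density of $H^{(k)}_{r+1}$-copies to a single copy of the blowup $H^{(k)}_{r+1}(t)$. The argument is folklore, but the inhomogeneous structure of $H^{(k)}_{r+1}$ (a clique-like core with disjoint tails of $k-2$ new vertices attached to each pair) requires some care: after pigeonholing on the core, one must run dependent random choice separately on each of the $\binom{r+1}{2}$ tail classes while preserving the cross-class compatibility. A secondary subtlety is whether the stability statement is stated in~\cite{DM06} in precisely the form above; if not, it can be re-derived by combining Mubayi's shadow/link-hypergraph proof of~\Cref{turan} with the standard Simonovits-style stability machinery.
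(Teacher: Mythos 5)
Your proposal follows essentially the same route as the paper: show that $G$ has only $o(n^{v(H^{(k)}_{r+1})})$ copies of $H^{(k)}_{r+1}$ (since a positive density of copies would force a copy of the blowup), apply the hypergraph removal lemma to pass to an $H^{(k)}_{r+1}$-free subhypergraph with $\mathrm{ex}(n,H^{(k)}_{r+1})-o(n^k)$ hyperedges via~\eqref{blowup}, and then invoke stability for $H^{(k)}_{r+1}$. The only discrepancy is bibliographic: the paper takes the stability statement from Pikhurko~\cite{OP13} rather than deriving it from Mubayi~\cite{DM06}, a point you already flag as a possible subtlety.
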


We deduce~\Cref{new:stability} by a standard argument, using a stability lemma for $H^{(k)}_{r+1}$ and the hypergraph removal lemma.

\begin{lemma}[Pikhurko~\cite{OP13}]
\label{stabilityH}
Let $r\geq k\geq3$ be fixed integers. Every $n$-vertex $H^{(k)}_{r+1}$-free $k$-uniform hypergraph with $\mathrm{ex}(n,H^{(k)}_{r+1})-o(n^k)$ hyperedges contains an $r$-partite subhypergraph with $\binom{r}{k}(n/r)^k-o(n^k)$ hyperedges.
\end{lemma}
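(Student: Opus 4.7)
The plan is to translate the hypergraph extremal problem into a graph extremal problem via a ``codegree'' auxiliary graph, and then invoke a stability statement for $K_{r+1}$-free graphs. Write $d_G(u,v)$ for the codegree of a pair in $G$, fix a small $\delta>0$ (tending to $0$ slower than the deficit from $\mathrm{ex}(n,H^{(k)}_{r+1})$), and let $L$ be the graph on $V(G)$ whose edges are precisely the pairs with $d_G(u,v)\geq\delta n^{k-2}$.

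The first key step is to show that $L$ is $K_{r+1}$-free. Given a putative clique $v_1,\dots,v_{r+1}$ in $L$, I would greedily build a copy of $H^{(k)}_{r+1}$ in $G$ by processing the pairs $\{v_i,v_j\}$ one at a time and choosing, for each, a $(k-2)$-set $S_{ij}\subseteq V(G)$, disjoint from $\{v_1,\dots,v_{r+1}\}$ and from all previously chosen $S_{i'j'}$, such that $\{v_i,v_j\}\cup S_{ij}$ is an edge of $G$. At each step only $O(1)$ vertices are forbidden, while the codegree hypothesis yields at least $\delta n^{k-2}-O(n^{k-3})$ valid choices; hence the process succeeds for $n$ large, producing the forbidden hypergraph and a contradiction.

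The second step counts the edges of $G$ through $L$. Each hyperedge of $G$ either contains a low-codegree pair (collectively at most $\binom{n}{2}\cdot\delta n^{k-2}\leq \delta n^k/2$ hyperedges) or else all of its $\binom{k}{2}$ pairs lie in $L$, in which case it is a $k$-clique of $L$. Hence $L$ contains at least $\binom{r}{k}(n/r)^k-\tfrac{1}{2}\delta n^k-o(n^k)$ copies of $K_k$. Since $L$ is $K_{r+1}$-free, a classical theorem of Zykov caps its $K_k$-count by $\binom{r}{k}(n/r)^k+o(n^k)$, with equality asymptotically attained by the Tur\'an graph $T_r(n)$. I would then invoke a clique-stability result: any $K_{r+1}$-free graph whose $K_k$-count is within $o(n^k)$ of the Zykov maximum is $o(n^2)$-close in edit distance to $T_r(n)$. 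This yields a partition $V(G)=V_1\dot\cup\cdots\dot\cup V_r$ with $|V_i|=n/r+o(n)$ such that $L$ differs from the complete $r$-partite graph on these parts in at most $o(n^2)$ edges.

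Finally I would read off the conclusion. Consider the $r$-partite subhypergraph $G'\subseteq G$ consisting of hyperedges that use one vertex in each of $k$ distinct parts. Any edge of $G\setminus G'$ either (a) contains a low-codegree pair, contributing at most $\delta n^k$, or (b) has two vertices in some common part $V_i$. For (b), the relevant intra-part pair is either outside $L$ (already counted in (a)) or one of the exceptional $o(n^2)$ intra-part edges of $L$; the latter together account for at most $o(n^2)\cdot n^{k-2}=o(n^k)$ hyperedges. Letting $\delta\to 0$ gives $e(G')\geq \binom{r}{k}(n/r)^k-o(n^k)$, as required. The main obstacle is the clique-stability statement used in the second step: one needs not merely edge-stability for $K_{r+1}$-free graphs (Erd\H{o}s--Simonovits) but the stronger statement that near-maximality of the $K_k$-count forces proximity to the Tur\'an graph; this strengthening, obtained via a careful convexity/averaging analysis relating $K_k$-counts to lower-clique counts, is the technically delicate core of the proof.
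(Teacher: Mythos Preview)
The paper does not prove this lemma at all: it is quoted verbatim from Pikhurko~\cite{OP13} and used as a black box in the short deduction of Lemma~\ref{new:stability}. So there is no ``paper's own proof'' to compare against.

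That said, your outline is sound and is in fact close in spirit to how Mubayi and Pikhurko treat the expanded clique $H^{(k)}_{r+1}$: pass to the codegree (shadow) graph $L$, show $L$ is $K_{r+1}$-free by a greedy embedding of $H^{(k)}_{r+1}$, and then transfer a graph stability statement back to $G$. Your Step~1 (the greedy embedding) and your final transfer step are correct as written. The only substantive point is the one you already isolate: you need a stability version of Zykov's theorem, namely that a $K_{r+1}$-free graph whose $K_k$-count is within $o(n^k)$ of $\binom{r}{k}(n/r)^k$ must be $o(n^2)$-close to $T_r(n)$. This is true and is in the literature, but note that the naive shortcut---``many $K_k$'s implies many edges, then apply Erd\H{o}s--Simonovits''---does not work directly: averaging $\sum_{uv} d_G(u,v)=\binom{k}{2}e(G)$ and bounding each codegree by $\binom{n-2}{k-2}$ only gives $e(L)\geq \tfrac{1}{2}\,\frac{r!}{(r-k)!\,r^{k}}\,n^2 - o(n^2)$, which is strictly below the Tur\'an threshold for $k\geq 3$. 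So one genuinely needs the clique-count stability, obtained for instance via a weighted/Lagrangian argument or via Nikiforov's clique-density results. With that ingredient in hand, your proof goes through.
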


\begin{lemma}[R\"{o}dl--Nagle--Skokan--Schacht--Kohayakawa~\cite{RN05}]
\label{removal}
Let $k\geq2$ be an integer and $H$ be a fixed $k$-uniform hypergraph. If an $n$-vertex $k$-uniform hypergraph $G$ contains $o(n^{v(H)})$ copies of $H$, then one can remove all the copies of $H$ in $G$ by deleting $o(n^k)$ hyperedges.
\end{lemma}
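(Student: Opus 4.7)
The plan is to deploy the regularity-plus-counting-plus-cleanup paradigm that underlies Szemer\'edi's triangle removal lemma and its $k$-uniform generalizations. At a high level, I would apply a hypergraph regularity lemma to $G$ to produce a partitioned structure, delete a small fraction of ``bad'' hyperedges to obtain a cleaned subhypergraph $G'\subseteq G$, and then invoke a matching counting lemma to argue that if $G'$ still contained even a single copy of $H$ then $G$ would contain $\Omega(n^{v(H)})$ copies, contradicting the hypothesis. The parameters are calibrated so that the total number of deleted hyperedges is $o(n^k)$, which is the conclusion of the lemma.

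Concretely, given a target slack $\eta>0$, choose auxiliary parameters $\varepsilon\ll d\ll\eta$ and apply the hypergraph regularity lemma of R\"odl--Skokan / Nagle--R\"odl--Schacht (equivalently Gowers) to $G$. This produces a coordinated hierarchy of partitions of the complete $j$-uniform hypergraph on $V(G)$ for each $1\leq j\leq k-1$, such that for all but an $\varepsilon$-fraction of the $(k-1)$-polyads $\hat{P}$, the restriction of $G$ to $\hat{P}$ is $\varepsilon$-regular (in the appropriate hierarchical sense) with some relative density $d(\hat{P})$. Form $G'$ by deleting every hyperedge supported on (a) an irregular $(k-1)$-polyad, (b) a polyad of density less than $d$, or (c) a polyad whose underlying lower-order blocks are too sparse to support counting. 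Each class contributes at most $O((\varepsilon+d)n^k)+o(n^k)$ hyperedges, and with $\varepsilon,d$ small enough in terms of $\eta$ the total is at most $\eta n^k$.

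Now suppose for contradiction that $G'$ contains a copy of $H$. Reading off which blocks the $v(H)$ vertices of this copy lie in, the copy is supported on a constellation of $e(H)$ regular, dense $(k-1)$-polyads over $v(H)$ blocks of the partition. The hypergraph counting lemma, the natural partner of the regularity lemma, then guarantees at least $c(H,d)\cdot n^{v(H)}$ copies of $H$ inside these blocks of $G$, for some constant $c(H,d)>0$ independent of $n$. Since $G$ contains only $o(n^{v(H)})$ copies of $H$, this is impossible once $n$ is sufficiently large. Hence $G'$ is $H$-free, so we have removed every copy of $H$ in $G$ using at most $\eta n^k$ hyperedges. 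Letting $\eta=\eta(n)\to 0$ slowly as $n\to\infty$ upgrades this to the required $o(n^k)$ bound.

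The overwhelming obstacle, unsurprisingly, is the hypergraph regularity and counting lemmas themselves for $k\geq 3$. Unlike the graph case, one cannot regularize a $k$-uniform hypergraph directly: one must build a carefully coordinated hierarchy of partitions of $j$-uniform hypergraphs for every $j<k$, with a delicate notion of regularity tailored so that a matching counting lemma holds, and the combinatorial bookkeeping for polyads, blocks and doubly-exponential tower-type bounds on partition sizes is intricate. Developing that machinery \emph{is} the content of \cite{RN05}; once it is accepted as a black box, the cleanup and contradiction above are routine. My proposal therefore articulates the reduction of the removal lemma to the regularity and counting lemmas and defers the latter to \cite{RN05}, which is exactly how the lemma is used throughout the rest of this paper.
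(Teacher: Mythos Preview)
The paper does not prove this lemma at all: it is stated as a citation of \cite{RN05} and used as a black box (specifically in the proof of \Cref{new:stability}). So there is no ``paper's own proof'' to compare your proposal against.

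That said, your sketch is the correct high-level architecture of the argument in \cite{RN05}: hypergraph regularity, clean out irregular and sparse polyads, then invoke the counting lemma to derive a contradiction from any surviving copy of $H$. You are also right that the genuine content lies in the regularity and counting lemmas themselves, and that once those are accepted the removal step is routine. For the purposes of this paper, no proof is expected here; the lemma is imported wholesale.
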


\begin{proof}[Proof of~\Cref{new:stability}]
Let $G$ be an $n$-vertex $H^{(k)}_{r+1}(t)$-free $k$-uniform hypergraph with $\mathrm{ex}(n,H^{(k)}_{r+1}(t))-o(n^k)$ hyperedges. By~\eqref{blowup} we have $e(G)=\mathrm{ex}(n,H^{(k)}_{r+1})-o(n^k)$. Note that the hypergraph $G$ cannot contain $\Omega(n^{v(H^{(k)}_{r+1})})$ copies of $H^{(k)}_{r+1}$, as it otherwise were to contain a copy of $H^{(k)}_{r+1}(t)$. Now that the number of copies of $H^{(k)}_{r+1}$ in $G$ is $o(n^{v(H^{(k)}_{r+1})})$, by~\Cref{removal} there exists an $H^{(k)}_{r+1}$-free subhypergraph $G'$ of $G$ with 
\[
e(G')=e(G)-o(n^k)=\mathrm{ex}(n,H^{(k)}_{r+1})-o(n^k)
\]
hyperedges. The statement then follows from~\Cref{stabilityH} applied to $G'$.
\end{proof}

The following fact has been used implicitly in the literature (see, e.g.,~\cite{DM06,OP13}), and we include a proof here for completeness.
\begin{lemma}
\label{parts}
Let $r\geq k\geq2$ be fixed integers. If an $n$-vertex $r$-partite $k$-uniform hypergraph has $\binom{r}{k}(n/r)^k-o(n^k)$ hyperedges, then each part must be of size $n/r-o(n)$.
\end{lemma}
\begin{proof}[Proof of~\Cref{parts}]
Let $G$ be such a hypergraph and let $n_1\leq\dots\leq n_r$ denote the part sizes of $G$. For each $i\in[r]$ let $\alpha_i=n_i/n$ be the scaled part size. Note that $\alpha_1+\dots+\alpha_r=1$. Then
\[e(G)\leq\sum_{\mathcal{I}\in\binom{[r]}{k}}\prod_{i\in\mathcal{I}}n_i=n^k\sum_{\mathcal{I}\in\binom{[r]}{k}}\prod_{i\in\mathcal{I}}\alpha_i\]
By Maclaurin's inequality, see~\cite{BP03},
\begin{equation}
\label{MacLaurin}
\sum_{\mathcal{I}\in\binom{[r]}{k}}\prod_{i\in\mathcal{I}}\alpha_i\leq\binom{r}{k}\left(\frac{1}{r}\right)^k
\end{equation}
holds for all $\alpha_1,\dots,\alpha_r\geq0$ with $\alpha_1+\dots+\alpha_r=1$. Moreover, the equality in~\eqref{MacLaurin} holds if and only if $\alpha_1=\dots=\alpha_r=1/r$.

Suppose for contradiction that $n_r\geq n/r+\delta n$ with some constant $\delta>0$. Since $\alpha_r\geq 1/r+\delta$, the inequality in~\eqref{MacLaurin} is strict. Thus, there exists some constant $\eps>0$ such that $e(G)\leq\binom{r}{k}(n/r)^k-\eps n^k$, a contradiction.
\end{proof}

\subsection{Orthogonality of affine spaces}
Let $||\cdot||$ denote the Euclidean norm. For $P,Q\subseteq\RR^d$, we write $P\to Q$ if for every $p\in P$, $||p-q||=||p-q'||$ holds for all $q,q'\in Q$. In other words, $P\to Q$ if every point in $P$ is equidistant to all points in $Q$. We write $P\leftrightarrow Q$ if $P\to Q$ and $Q\to P$. Recall that $\mathrm{Aff}(P)$ denotes the affine space spanned by $P$.
\begin{lemma}
\label{affine}
Let $d\in\NN$ and let $P,Q\subseteq\RR^d$ be non-empty subsets with $|Q|\geq2$ such that $P\to Q$. Then $\mathrm{Aff}(P)$ and $\mathrm{Aff}(Q)$ are orthogonal. Moreover, $Q$ is contained in a sphere in $\mathrm{Aff}(Q)$.
\end{lemma}
\begin{proof}
For the first part of~\Cref{affine}, if $|P|=1$, then $\mathrm{Aff}(P)$ and $\mathrm{Aff}(Q)$ are clearly orthogonal as $\mathrm{Aff}(P)$ consists of a single point. If $|P|\geq2$, let $\{p_1,p_2,\dots\}\subseteq P$ and $\{q_1,q_2,\dots\}\subseteq Q$ be finite subsets that span the affine spaces $\mathrm{Aff}(P)$ and $\mathrm{Aff}(Q)$, respectively. Then we have \[\mathrm{Aff}(P)=p_1+\mathrm{Span}(p_2-p_1,\dots)\quad\text{and}\quad\mathrm{Aff}(Q)=q_1+\mathrm{Span}(q_2-q_1,\dots).\] To show that $\mathrm{Aff}(P)$ and $\mathrm{Aff}(Q)$ are orthogonal, it suffices to check $\langle{p_i-p_1,q_j-q_1}\rangle=0$ for $i,j\geq2$. Due to $P\to Q$, we have $\lVert{p_1-q_1}\rVert^2=\lVert{p_1-q_j}\rVert^2$ and $\lVert{p_i-q_1}\rVert^2=\lVert{p_i-q_j}\rVert^2$. Then
\begin{align*}
\langle{p_i-p_1,q_j-q_1}\rangle&=\langle{p_i,q_j}\rangle-\langle{p_i,q_1}\rangle-\langle{p_1,q_j}\rangle+\langle{p_1,q_1}\rangle\\
&=\frac{\lVert{p_i}\rVert^2+\lVert{q_j}\rVert^2-\lVert{p_i-q_j}\rVert^2}{2}-\frac{\lVert{p_i}\rVert^2+\lVert{q_1}\rVert^2-\lVert{p_i-q_1}\rVert^2}{2}\\
&-\frac{\lVert{p_1}\rVert^2+\lVert{q_j}\rVert^2-\lVert{p_1-q_j}\rVert^2}{2}+\frac{\lVert{p_1}\rVert^2+\lVert{q_1}\rVert^2-\lVert{p_1-q_1}\rVert^2}{2}=0.
\end{align*}

Now we prove the second part of~\Cref{affine}. Provided that $\mathrm{Aff}(P)$ and $\mathrm{Aff}(Q)$ are orthogonal, the projection of $\mathrm{Aff}(P)$ onto $\mathrm{Aff}(Q)$ has dimension $0$, which is a point, denoted by $c$. Since each point of $P$ is equidistant to all points in $Q$, its projection $c$ is also equidistant to all points in $Q$. Thus the points in $Q$ all lie on a sphere in $\mathrm{Aff}(Q)$ centered at $c$.
\end{proof}

The following lemma is a direct consequence of~\Cref{affine}.
\begin{lemma}
\label{witzig}
Let $d,k\in\NN$ with $k\geq3$. Let $A_1,\dots,A_k\subseteq \RR^d$ with $\lvert{A_i}\rvert\geq3$ for all $i\in[k]$. If every $k$-tuple $(a_1,\dots,a_k)\in \prod_{i\in[k]}A_i$ forms a regular $(k-1)$-simplex, then $\mathrm{Aff}(A_1),\dots,\mathrm{Aff}(A_k)$ are pairwise orthogonal affine spaces of dimension at least $2$. Furthermore, $A_1,\dots,A_k$ are subsets of $k$ spheres in $\mathrm{Aff}(A_1),\dots,\mathrm{Aff}(A_k)$, respectively.
\end{lemma}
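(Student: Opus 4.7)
The plan is to deduce the claim directly from \Cref{affine} applied to each pair $(A_i, A_j)$. The first step is to verify that every regular simplex arising here has the same side length. Fix $i \neq j$, and since $k \geq 3$ pick some $\ell \notin \{i, j\}$ together with a fixed element $a_m \in A_m$ for each $m \notin \{i, j\}$. For any $a_i, a_i' \in A_i$ and $a_j \in A_j$, the two regular simplices $(a_i, a_j, \ldots, a_\ell, \ldots)$ and $(a_i', a_j, \ldots, a_\ell, \ldots)$ share the edge between $a_j$ and $a_\ell$, forcing $\dist(a_i, a_j) = \dist(a_j, a_\ell) = \dist(a_i', a_j)$. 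Varying $a_j$ by the same swap argument then yields a common value $s$ with $\dist(a_i, a_j) = s$ for every $a_i \in A_i$, $a_j \in A_j$, and every $i \neq j$. In particular $A_i \leftrightarrow A_j$ for all pairs $(i, j)$.

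Applying \Cref{affine} to each pair then gives pairwise orthogonality of $\mathrm{Aff}(A_1), \dots, \mathrm{Aff}(A_k)$, and shows that each $A_i$ lies on a sphere inside $\mathrm{Aff}(A_i)$. Since a sphere in a one-dimensional affine space contains at most two points while $|A_i| \geq 3$, each $\mathrm{Aff}(A_i)$ has dimension at least $2$, establishing the first half of the conclusion.

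What remains is to show that all these spheres share a single global center, not merely pairwise common centers. The key observation is that $A_i$ spans $\mathrm{Aff}(A_i)$, so the center of any sphere in $\mathrm{Aff}(A_i)$ through $A_i$ is uniquely determined: two candidates $c, c' \in \mathrm{Aff}(A_i)$ would satisfy $\langle p - q, c - c' \rangle = 0$ for all $p, q \in A_i$, forcing $c - c'$ to be orthogonal to the translation space of $\mathrm{Aff}(A_i)$ and hence $c = c'$. Let $o_i$ denote this unique center. For each pair $(i, j)$, \Cref{affine} produces a common center lying in both $\mathrm{Aff}(A_i)$ and $\mathrm{Aff}(A_j)$; by uniqueness, this point must equal both $o_i$ and $o_j$, so $o_i = o_j$ for all $i \neq j$, and all $o_i$ coincide. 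The only real subtlety in the plan is this last reconciliation step, but uniqueness of the circumcenter for a set spanning its affine hull handles it cleanly, without further geometric computation.
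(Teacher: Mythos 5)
Your proof is correct and follows essentially the same route as the paper: establish $A_i\leftrightarrow A_j$ for all pairs via the shared-edge/swap argument, then invoke \Cref{affine} for orthogonality and the common-center conclusion, with the dimension bound coming from $|A_i|\geq3$ points on a sphere. The one place you add detail is the reconciliation of the pairwise common centers into a single global center via uniqueness of the circumcenter of a set spanning its affine hull — a step the paper leaves implicit when it cites \Cref{affine} for all $k$ sets at once — and your handling of it is correct.
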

\begin{proof}[Proof of~\Cref{witzig}]
First observe that $||a_1-a_2||=||a_1-a_2'||$ holds for every $a_1\in A_1$ and $a_2,a_2'\in A_2$. Otherwise for any $(a_3,\dots,a_k)\in\prod_{i=3}^{k}A_i$, either $(a_1,a_2,a_3,\dots,a_k)$ or $(a_1,a_2',a_3,\dots,a_k)$ does not form a regular $(k-1)$-simplex, a contradiction. Thus, $A_1\to A_2$. Due to symmetry, we conclude that $A_i\leftrightarrow A_j$ for all distinct $i,j\in[k]$.

Then by~\Cref{affine}, the affine spaces $\mathrm{Aff}(A_1),\dots,\mathrm{Aff}(A_k)$ are pairwise orthogonal and $A_1,\dots,A_k$ are subsets of $k$ spheres in $\mathrm{Aff}(A_1),\dots,\mathrm{Aff}(A_k)$, respectively. Moreover, since each $A_i$ is a subset of a sphere and $|A_i|\geq3$, the points in $A_i$ are non-collinear, which implies that the dimension of $\mathrm{Aff}(A_i)$ is at least $2$.
\end{proof}

We also need the following lemma concerning the geometric properties of a collection of pairwise orthogonal spheres.
\begin{lemma}
\label{spheres_distance_center}
Let $d,k\in\NN$ with $k\geq3$. Let $C_1,\dots,C_k\subseteq\mathbb{R}^d$ be a collection of pairwise orthogonal spheres. For each $i\in[k]$ let $A_i\subseteq C_i$ be such that $\mathrm{Aff}(A_i)=\mathrm{Aff}(C_i)$.
\begin{enumerate}[(i)]
    \item If every $k$-tuple in $\prod_{i\in[k]}A_i$ forms a regular $(k-1)$-simplex, then every $k$-tuple in $\prod_{i\in[k]}C_i$ forms a regular $(k-1)$-simplex.
    \item If every $k$-tuple in $\prod_{i\in[k]}C_i$ forms a regular $(k-1)$-simplex and $\sum_{i\in[k]}\dim\mathrm{Aff}(C_i)=d$, then $C_1,\dots,C_k$ have the same center.
\end{enumerate}
\end{lemma}
\begin{proof}[Proof of~\Cref{spheres_distance_center}]
Let $c_1,\dots,c_r$ denote the centers of $C_1,\dots,C_r$, respectively.

We first prove item (i). Fix any distinct $i,j\in[k]$ and let $(x,y)\in C_i\times C_j$. Let $(a_i,a_j)\in A_i\times A_j$. Since every $k$-tuple in $\prod_{\ell\in[k]}A_\ell$ forms a regular $(k-1)$-simplex, it suffices to show that $||x-y||=||a_i-a_j||$. Note that every $k$-tuple in $\prod_{\ell\in[k]}A_\ell$ forming a regular $(k-1)$-simplex also implies that $A_i\leftrightarrow A_j$, from which it follows that the projection of $\mathrm{Aff}(A_i)=\mathrm{Aff}(C_i)$ onto $\mathrm{Aff}(A_j)=\mathrm{Aff}(C_j)$ is the center of a sphere in $\mathrm{Aff}(A_j)$ containing $A_j$. Due to $\mathrm{Aff}(A_j)=\mathrm{Aff}(C_j)$, this sphere is exactly $C_j$. Hence, the projection of $\mathrm{Aff}(C_i)$ onto $\mathrm{Aff}(C_j)$ is the center $c_j$ of $C_j$. Similarly, the projection of $\mathrm{Aff}(C_j)$ onto $\mathrm{Aff}(C_i)$ is the center $c_i$ of $C_i$. Then we have that
\[||x-y||^2=||x-c_i||^2+||c_i-c_j||^2+||y-c_j||^2=||a_i-c_i||^2+||c_i-c_j||^2+||a_j-c_j||^2=||a_i-a_j||^2,\]
which completes the proof of item (i).

For item (ii), since $\mathrm{Aff}(C_1),\dots,\mathrm{Aff}(C_k)$ are pairwise orthogonal and their dimensions sum to $d$, we can equip $\RR^d$ with an orthonormal basis given by the union of $B_1,\dots,B_k$, where each $B_i$ is an orthonormal basis of the linear subspace $\mathrm{Aff}(C_i)-c_i$ and $|B_i|=\dim\mathrm{Aff}(C_i)\geq1$. Under this basis of $\RR^d$, we can assume that
\[\mathrm{Aff}(C_i)=c_i+\underbrace{\{0\}\times\dots\times\{0\}}_{\sum_{j<i}\dim\mathrm{Aff}(C_j)}\times\RR^{\dim\mathrm{Aff}(C_i)}\times\underbrace{\{0\}\times\dots\times\{0\}}_{\sum_{j>i}\dim\mathrm{Aff}(C_j)}\] for all $i\in[k]$. Fix any distinct $i,j\in[k]$. It suffices to show that $c_i=c_j$. Since every $k$-tuple in $\prod_{\ell\in[k]}C_\ell$ forms a regular $(k-1)$-simplex, the projections of $c_i$ and $c_j$ onto $\mathrm{Aff}(C_1)$ are both $c_1$. Consequently, the first $\dim\mathrm{Aff}(C_1)$ coordinates of $c_i$ and $c_j$ must coincide. By the same reasoning, the remaining coordinates of $c_i$ and $c_j$ must also coincide, and therefore $c_i=c_j$.
\end{proof}

\section{The asymptotics of $S_d^k(n)$}
\label{sec:warmup}
In this section, we present a short proof of~\Cref{simplex}, which also serves as a warm-up for the arguments in the next section. Our key observation is the following lemma.

\begin{lemma}
\label{Hfree}
Let $d\geq2k\geq3$ be integers and let $r=\lfloor{d/2}\rfloor$. Let $G$ be a $k$-uniform hypergraph, whose vertices are points in $\mathbb{R}^d$ and whose hyperedges are the regular $(k-1)$-simplices spanned by its vertices. Then $G$ is $H^{(k)}_{r+1}(3)$-free.
\end{lemma}
\begin{proof}[Proof of~\Cref{Hfree}]
Suppose that $G$ contains a copy of $H^{(k)}_{r+1}(3)$ and let \[U_1,\dots,U_{r+1},W_1,\dots,W_{\binom{r+1}{2}(k-2)}\] be the blowups of the vertices, where $U_{1},\dots,U_{r+1}$ correspond to the vertices of the original $K_{r+1}$. Then for any distinct $i,j\in[r+1]$, there exist some $W_{\ell_1},\dots,W_{\ell_{k-2}}$, such that every
\[(u_i,u_j,w_{\ell_1},\dots,w_{\ell_{k-2}})\in U_i\times U_j\times W_{\ell_1}\times\dots\times W_{\ell_{k-2}}\]
appears as a hyperedge in $G$, namely, $(u_i,u_j,w_{\ell_1},\dots,w_{\ell_{k-2}})$ determines a regular $(k-1)$-simplex. Then by~\Cref{witzig}, in particular $\mathrm{Aff}(U_i)$ and $\mathrm{Aff}(U_j)$ are orthogonal affine spaces of dimension at least $2$. Applying the same argument to all distinct $i,j\in[r+1]$, we obtain that $\mathrm{Aff}(U_1),\dots,\mathrm{Aff}(U_{r+1})$ are pairwise orthogonal and all have dimension at least $2$. But then we have
\begin{equation}
\label{dimension}
2(r+1)\leq \sum_{i\in[r+1]}\dim\mathrm{Aff}(U_i)\leq\dim\RR^d=d<2(r+1),  
\end{equation}
a contradiction.
\end{proof}

\begin{proof}[Proof of~\Cref{simplex}]
Let $r=\lfloor{d/2}\rfloor\geq k\geq6$ be fixed integers and $X\subseteq\RR^{d}$ be a set of $n$ points that spans $S^k_d(n)$ regular $(k-1)$-simplices. We define a $k$-uniform hypergraph $G$, whose vertex set is $X$ and whose hyperedges correspond to regular $(k-1)$-simplices spanned by $X$.

By~\Cref{Hfree} we know that $G$ is $H^{(k)}_{r+1}(3)$-free. Then by~\eqref{blowup} we have that \[S_d^k(n)=e(G)\leq\binom{r}{k}\left(\frac{n}{r}\right)^k+o(n^k).\]
The lower bound on $S_d^k(n)$ follows from the construction described in~\Cref{sec:construction}, where we take $r$ pairwise orthogonal unit circles centered at the origin and place $n$ points on them as evenly as possible. This completes the proof.
\end{proof}

\section{The stability property of an almost-extremal point set}
\label{sec:stability}
In this section we prove our stability result.
\begin{proof}[Proof of~\Cref{simplex_stability}]
Let $r\geq k\geq3$ be fixed integers and let $X\subseteq\RR^{2r}$ be a set of $n$ points that spans at least $S^k_{2r}(n)-o(n^k)$ regular $(k-1)$-simplices. Since $S^k_{2r}(n)=\binom{r}{k}(n/r)^k+o(n^k)$ by~\Cref{simplex}, $X$ spans at least $\binom{r}{k}(n/r)^k-o(n^k)$ regular $(k-1)$-simplices. Let $G$ be a $k$-uniform hypergraph on the vertex set $X$, where $E(G)$ consists of all regular $(k-1)$-simplices spanned by $X$.

By~\Cref{Hfree}, $G$ is $H^{(k)}_{r+1}(3)$-free. Moreover, by~\eqref{blowup} we have that \[e(G)\geq\binom{r}{k}\left(\frac{n}{r}\right)^k-o(n^k)\geq\mathrm{ex}(n,H^{(k)}_{r+1}(3))-o(n^k).\]
It then follows from~\Cref{new:stability} that $G$ contains an $r$-partite subhypergraph $M$ with vertex classes $V_1\dot\cup\dots\dot\cup V_r$ and $\binom{r}{k}(n/r)^k-o(n^k)$ hyperedges. Note that by~\Cref{parts} we have $|V_i|=n/r-o(n)$ for all $i\in[r]$.

We shall prove two claims that establish the structure of $X$ described in~\Cref{simplex_stability}.

\begin{claim}
\label{lustig}
For each $i\in[r]$ the following holds.
\begin{enumerate}[(i)]
    \item There exists $A_i\subseteq V_i$ with $|A_i|=n/r-o(n)$, such that $\dim\mathrm{Aff}(A_i)=2$.
    \item There is a circle $C_i$ such that $A_i\subseteq C_i \subseteq\mathrm{Aff}(A_i)$.
\end{enumerate}
\end{claim}
\begin{proof}[Proof of~\Cref{lustig}]
It suffices to prove the statement for the case $i=r$, as the remaining cases follow by the same argument. Fix a sufficiently small constant $\eps>0$. For any distinct $\ell_1,\dots,\ell_{k-1}\in[r-1]$, a $(k-1)$-tuple $(v_{\ell_1},\dots,v_{\ell_{k-1}})\in \prod_{i\in[k-1]}V_{\ell_i}$ is called ``good'' if there are at least $|V_r|-\eps n$ vertices $v_r\in V_r$ such that $v_{\ell_1}\dots v_{\ell_{k-1}}v_r$ is a hyperedge in $M$, otherwise it is ``bad''. The total number of bad tuples is at most $\eps n^{k-1}$, because otherwise \[e(M)\leq\left(\sum_{\mathcal{I}\in\binom{[r]}{k}}\prod_{i\in\mathcal{I}}|V_i|\right)-\eps n^{k-1}\cdot\eps n\leq \binom{r}{k}\left(\frac{n}{r}\right)^k-\eps^2 n^k+o(n^k),\] a contradiction.
So if we take a $(k-1)$-tuple $(v_{\ell_1},\dots,v_{\ell_{k-1}})\in \prod_{i\in[k-1]}V_{\ell_i}$ uniformly at random, the probability that $(v_{\ell_1},\dots,v_{\ell_{k-1}})$ is bad is at most
\[\mathbb{P}_{\text{bad}}\leq\frac{\eps n^{k-1}}{\prod_{i\in[k-1]}|V_{\ell_i}|}\leq r^{k-1}\eps+o(1).\]
Now for each $i\in[r-1]$ we take a subset $V_i'\subseteq V_i$ of size $3$ uniformly at random. Then by the union bound
\[\mathbb{P}\left(\forall\,\mathcal{I}\in\binom{[r-1]}{k-1}:\text{ all the $(k-1)$-tuples in $\prod_{i\in\mathcal{I}}V_i'$ are good}\right)\geq 1- \binom{r-1}{k-1}3^{k-1}\mathbb{P}_{\text{bad}}>0,\]
provided that $\eps$ is sufficiently small. Accordingly, there exists $V_i'\subseteq V_i$ of size $3$ for each $i\in[r-1]$, such that all $(k-1)$-tuples among them are good. Let $A_r\subseteq V_r$ be the common neighborhood of the $(k-1)$-tuples in $\bigcup_{\mathcal{I}\in\binom{[r-1]}{k-1}}\prod_{i\in\mathcal{I}}V_i'$. Then
\begin{align*}
|A_r|\geq |V_r|- \binom{r-1}{k-1}3^{k-1}\eps n\geq\frac{n}{r}-(3r)^{k-1}\eps n-o(n).
\end{align*}
In particular, for any $\mathcal{I}\in\binom{[r-1]}{k-1}$, it holds that every $k$-tuple in $\left(\prod_{i\in\mathcal{I}}V_i'\right)\times A_r$ appears as a hyperedge in $M$, i.e., a regular $(k-1)$-simplex. By~\Cref{witzig} we have that $\mathrm{Aff}(V_1'),\dots,\mathrm{Aff}(V_{r-1}'),\mathrm{Aff}(A_r)$ are pairwise orthogonal affine spaces of dimension at least $2$. Since \[2r\leq\dim\mathrm{Aff}(V_1')+\dots+\dim\mathrm{Aff}(V_{r-1}')+\dim\mathrm{Aff}(A_r)\leq \dim\mathbb{R}^{2r}=2r,\]
it further holds that $\dim\mathrm{Aff}(V_1')=\dots=\dim\mathrm{Aff}(V_{r-1}')=\dim\mathrm{Aff}(A_r)=2$. Moreover, by~\Cref{witzig} the points of $A_r$ all lie on some circle $C_r\subseteq\mathrm{Aff}(A_r)$.

We complete the proof by noting that $\eps$ can be chosen arbitrarily small.
\end{proof}

Let $A_1,\dots,A_r$ and $C_1,\dots,C_r$ be given by~\Cref{lustig}.
\begin{claim}
\label{spheres}
We have that
\begin{enumerate}[(i)]
    \item $C_1,\dots,C_r$ are pairwise orthogonal circles;
    \item $C_1,\dots,C_r$ have the same center and the same radius.
\end{enumerate}
\end{claim}
\begin{proof}[Proof of~\Cref{spheres}]
To prove item (i), it suffices to show that the affine planes $\mathrm{Aff}(C_1)=\mathrm{Aff}(A_1)$ and $\mathrm{Aff}(C_2)=\mathrm{Aff}(A_2)$ are orthogonal, as then by symmetry we have that $\mathrm{Aff}(C_1),\dots,\mathrm{Aff}(C_r)$ are pairwise orthogonal. Since $A_i\subseteq V_i$ with $|A_i|=n/r-o(n)$ for all $i\in[r]$, we have that \[e(M[A_1,\dots,A_r])=e(M)-o(n^{k})=\binom{r}{k}\left(\frac{n}{r}\right)^k-o(n^k),\] where $M[A_1,\dots,A_r]$ denotes the induced subhypergraph of $M$ on $A_1\dot\cup\dots\dot\cup A_r$. In particular, the number of hyperedges with one endpoint in each $A_i$ for $i\in[k]$ is at least $(n/r)^k-o(n^k)$. Let $K^{(k)}_{3,\dots,3}$ denote a complete $k$-partite $k$-uniform hypergraph with each part of size $3$. A classical result of Erd\H{o}s~\cite{Er64} shows that
\[\mathrm{ex}(n,K^{(k)}_{\scriptsize\underbrace{3,\dots,3}_{k}})=o(n^k),\]
which implies that there exists $A_i'\subseteq A_i$ of size $3$ for each $i\in[k]$, such that every $k$-tuple in $\prod_{i\in[k]}A_i'$ forms a hyperedge, i.e., a regular $(k-1)$-simplex. Then by~\Cref{witzig}, $\mathrm{Aff}(A_1')$ and $\mathrm{Aff}(A_{2}')$ are orthogonal affine spaces of dimension at least $2$. Since $\dim\mathrm{Aff}(A_1)=\dim\mathrm{Aff}(A_2)=2$, we have that $\mathrm{Aff}(A_1')=\mathrm{Aff}(A_1)$ and $\mathrm{Aff}(A_2')=\mathrm{Aff}(A_2)$. Thus $\mathrm{Aff}(A_1)$ and $\mathrm{Aff}(A_{2})$ are orthogonal. This proves item (i).

We now prove item (ii).

First, we show that every $r$-tuple in $\prod_{i\in[r]}C_i$ forms a regular $(r-1)$-simplex. In fact, it suffices to prove that for any $\mathcal{I}\in\binom{[r]}{k}$, every $k$-tuple in $\prod_{i\in\mathcal{I}}C_i$ forms a regular $(k-1)$-simplex. Here we only consider the case $\mathcal{I}=[k]$, as the remaining cases follow by symmetry. In the above proof of item (i) we already showed that there exists $A_i'\subseteq A_i\subseteq C_i$ with $\mathrm{Aff}(A_i')=\mathrm{Aff}(A_i)=\mathrm{Aff}(C_i)$ for each $i\in[k]$, such that every $k$-tuple in $\prod_{i\in[k]}A_i'$ forms a regular $(k-1)$-simplex. Then by~\Cref{spheres_distance_center} (i) we can conclude that every $k$-tuple in $\prod_{i\in[k]}C_i$ forms a regular $(k-1)$-simplex. 

Next, we show that $C_1,\dots,C_r$ have the same center. From item (i) we know that $C_1,\dots,C_r$ are pairwise orthogonal circles with $\sum_{i\in[r]}\dim\mathrm{Aff}(C_i)=2r$. Since every $r$-tuple in $\prod_{i\in[r]}C_i$ forms a regular $(r-1)$-simplex, by~\Cref{spheres_distance_center} (ii), $C_1,\dots,C_r$ have the same center.

Finally, we prove that $C_1,\dots,C_r$ have the same radius. For any distinct $i,j,\ell\in[r]$, let $\gamma_i,\gamma_j,\gamma_\ell$ denote the radii of $C_i,C_j,C_\ell$, respectively. Since $C_i,C_j,C_\ell$ are pairwise orthogonal with the same center and every $r$-tuple in $\prod_{i\in[r]}C_i$ forms a regular $(r-1)$-simplex, we have that $\gamma_i^2+\gamma_\ell^2=\gamma_j^2+\gamma_\ell^2$, yielding that $\gamma_i=\gamma_j$. This completes the proof of item (ii).
\end{proof}

Let $A_0\subseteq X$ be the set of points that are not in $\bigcup_{i\in[r]}A_i$. In summary, we have shown that there exists a partition $X=A_0\dot\cup A_1\dot\cup\dots\dot\cup A_r$ with $|A_0|=o(n)$ and $|A_i|=n/r-o(n)$ for every $i\in[r]$ (see~\Cref{lustig} (i)), and pairwise orthogonal circles $C_1,\dots,C_r\subseteq\RR^d$ (see~\Cref{spheres} (i)), such that $A_i\subseteq C_i$ for all $i\in[r]$ (see~\Cref{lustig} (ii)) and $C_1,\dots,C_r$ have the same center and the same radius (see~\Cref{spheres} (ii)). This completes the proof of~\Cref{simplex_stability}.
\end{proof}

\section{The number of regular $(k-1)$-simplices in even dimensions}
\label{sec:number}
In this section we prove~\Cref{triangle_even,simplex_even} and derive~\Cref{error_term} from them. We shall prove~\Cref{simplex_even} first. The proof of~\Cref{triangle_even} then follows with an additional optimization step.

\subsection{The general case}
\begin{proof}[Proof of~\Cref{simplex_even}]
Let $r\geq k\geq3$ be fixed integers. Let $X\subseteq\RR^{2r}$ be a set of $n$ points that attains $S^k_{2r}(n)$, where $n$ is sufficiently large. In order to establish~\Cref{simplex_even}, we shall first apply~\Cref{simplex_stability} to get a coarse structure of $X$, and then further examine this structure using the extremality of $X$.

By~\Cref{simplex_stability} we have a partition $X=A_0\dot\cup A_1\dot\cup\dots\dot\cup A_r$ with $|A_0|=o(n)$ and $|A_i|=n/r-o(n)$ for every $i\in[r]$, and pairwise orthogonal circles $C_1,\dots,C_r\subseteq\RR^{2r}$ with $A_i\subseteq C_i$ for all $i\in[r]$, such that $C_1,\dots,C_r$ have the same center and the same radius $\gamma>0$. In particular, we can assume that $A_{0}=X\setminus(\bigcup_{i\in[r]}C_i)$.

Since $\mathrm{Aff}(C_1),\dots,\mathrm{Aff}(C_r)$ are pairwise orthogonal and intersect at the common center of $C_1,\dots,C_r$, we can further assume without loss of generality that the common center is the origin and \[\mathrm{Aff}(C_i)=\underbrace{\{0\}\times\dots\times\{0\}}_{2(i-1)}\times\RR^2\times\underbrace{\{0\}\times\dots\times\{0\}}_{2r-2i}\] for all $i\in[r]$.

\begin{claim}
\label{ellig}
We have $X\subseteq\bigcup_{i\in[r]}C_{i}$, i.e., $A_0=\emptyset$.
\end{claim}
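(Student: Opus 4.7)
The plan is a contradiction argument: assuming some $x \in A_0$, I will exhibit a one-point swap $X \mapsto X' = (X \setminus \{x\}) \cup \{y\}$ with $y \in C_1 \setminus X$ that strictly increases the number of regular $(k-1)$-simplices, violating the maximality of $X$. The net change in the simplex count equals $G(y) - L(x)$, where $L(x)$ is the number of regular $(k-1)$-simplices in $X$ containing $x$ and $G(y)$ is the number of regular $(k-1)$-simplices in $(X \setminus \{x\}) \cup \{y\}$ containing $y$. The goal is to show $G(y) > L(x)$.

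Lower-bounding $G(y)$ is straightforward: since $C_1, \dots, C_r$ are pairwise orthogonal spheres with common center and common radius $\gamma$, any two points on distinct $C_i, C_j$ lie at distance exactly $\gamma\sqrt{2}$. Consequently, for any $y \in C_1 \setminus X$, every choice of distinct indices $i_2, \dots, i_k \in \{2, \dots, r\}$ and points $p_j \in A_{i_j}$ produces a regular $(k-1)$-simplex $\{y, p_2, \dots, p_k\}$ of side $\gamma\sqrt{2}$, so $G(y) \geq \binom{r-1}{k-1}(n/r)^{k-1}(1 - o(1))$.

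The heart of the proof is upper-bounding $L(x)$. I will define $B(x) := \{i \in [r] : \dist(x, p) = \gamma\sqrt{2} \text{ for every } p \in C_i\}$; decomposing $x$ orthogonally with respect to $\mathrm{Aff}(C_i)$ shows that $i \in B(x)$ iff $x \in \mathrm{Aff}(C_i)^\perp$ and $\|x\| = \gamma$. The crucial geometric claim is $|B(x)| \leq r - 2$: if $|B(x)| = r$ then $x = 0$, incompatible with $\|x\| = \gamma$; if $|B(x)| = r - 1$ with missing index $j$, then $x \in \mathrm{Aff}(C_j)$ with $\|x\| = \gamma$ forces $x \in C_j$, contradicting $x \in A_0$ (this case works uniformly for even and odd $d$, using $\dim \mathrm{Aff}(C_r) = 3$ in the odd case). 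I then partition the simplices counted by $L(x)$ according to the indices of the other vertices: simplices whose $k-1$ other vertices lie on $k-1$ distinct circles indexed by a subset of $B(x)$ contribute at most $\binom{|B(x)|}{k-1}(n/r)^{k-1}(1 + o(1)) \leq \binom{r-2}{k-1}(n/r)^{k-1}(1 + o(1))$; any circle outside $B(x)$ restricts the corresponding vertex to $O(1)$ candidate points; repeated indices require a good pair on a single circle ($O(n)$ per circle); and simplices with a second vertex in $A_0$ contribute at most $|A_0| \cdot n^{k-2} = o(n^{k-1})$. The last three categories together contribute only $o(n^{k-1})$.

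Combining these estimates yields $G(y) - L(x) \geq \left(\binom{r-1}{k-1} - \binom{r-2}{k-1}\right)(n/r)^{k-1}(1 - o(1)) - o(n^{k-1}) = \binom{r-2}{k-2}(n/r)^{k-1}(1 - o(1)) - o(n^{k-1})$, which is strictly positive for large $n$ since $r \geq k$ forces $\binom{r-2}{k-2} \geq 1$. This contradicts the maximality of $X$, so $A_0 = \emptyset$. The main obstacle is the bound on $L(x)$: without the geometric claim $|B(x)| \leq r - 2$, the leading terms of $L(x)$ and $G(y)$ could coincide, so characterizing exactly when $x$ can be $\gamma\sqrt{2}$-equidistant from an entire circle is what separates $\binom{r-1}{k-1}$ from $\binom{r-2}{k-1}$ at leading order and makes the swap argument go through.
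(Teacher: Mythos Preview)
Your approach coincides with the paper's: both run the same one-point swap, bounding the number of simplices through a stray $x\in A_0$ via the key geometric fact that at most $r-2$ of the $C_i$ can lie entirely at distance $\gamma\sqrt2$ from $x$ (your bound $|B(x)|\le r-2$ is exactly the content of the paper's case split on $\gamma_v=\|v\|$ and $|\mathcal I|$), and then comparing to the $\binom{r-1}{k-1}(n/r)^{k-1}$ simplices through a fresh Lenz point. Your ``$O(1)$ candidates'' for a sphere outside $B(x)$ and ``$O(n)$ good pairs'' for repeated indices are slightly too optimistic for the $2$-sphere $C_r$ in the odd case---the paper invokes the $O(n^{4/3})$ fixed-distance bound of Clarkson \etal\ for the latter and glosses over the former in the same way you do---but the $o(n^{k-1})$ conclusion, and hence the swap, are unaffected.
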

\begin{proof}[Proof of~\Cref{ellig}]
Suppose there exists $v\in A_0=X\setminus\bigcup_{i\in[r]}C_{i}$. Let $\mathcal{I}(v)\subseteq[r]$ be the set of indices $i\in[r]$, such that the projection of $v$ onto $\mathrm{Aff}(C_i)$ is the center of $C_i$, i.e., the origin. Observe that the distance between $v$ and any point in $\bigcup_{i\in\mathcal{I}(v)}C_i$ is $\sqrt{||v||^2+\gamma^2}$.

Let $N_1$, $N_2$, and $N_3$ denote the number of regular $(k-1)$-simplices spanned by $X$ that
\begin{itemize}
    \item contain $v$ and at least another point from $A_0$,
    \item are formed by $v$ and at least two points on $C_i$ for some $i\in[r]$,
    \item are formed by $v$ and $k-1$ points from pairwise distinct circles,
\end{itemize}
respectively. The number of regular $(k-1)$-simplices containing $v$ is then at most $N_1+N_2+N_3$.

Since $|A_0|=o(n)$, we have $N_1=o(n^{k-1})$.

For each $i\in[r]$, observe that the number of pairs of points in $X\cap C_i$ that can form a regular $(k-1)$-simplex with $v$ is at most $|X\cap C_i|\leq n$. Indeed, for any point $x\in C_i$ there are at most two other points in $C_i$ at distance $||v-x||$ from $x$. Consequently, each point in $X\cap C_i$ is contained in at most two such pairs, and hence the total number of such pairs is at most $|X\cap C_i|\leq n$. It follows that $N_2=O(n^{k-2})$.

To bound $N_3$, first note that the distance between any two points on different circles is $\sqrt{2\gamma^2}$. Hence, $N_3$ is upper bounded by the number of $(k-1)$-tuples $(x_1,\dots,x_{k-1})\in C_{\ell_1}\times\dots\times C_{\ell_{k-1}}$ with $||v-x_1||=\dots=||v-x_{k-1}||=\sqrt{2\gamma^2}$, over all distinct $\ell_1,\dots,\ell_{k-1}\in[r]$.

If $||v||\neq\gamma$, then the following holds. For any $i\in[r]$, either $i\in\mathcal{I}(v)$, which means that all points on $C_i$ have distance $\sqrt{||v||^2+\gamma^2}\neq\sqrt{2\gamma^2}$ to $v$, or $i\notin\mathcal{I}(v)$, implying that there are at most $2$ points on $C_i$ at distance $\sqrt{2\gamma^2}$ to $v$. So in this case we have $N_3=O(1)$.

If $|\mathcal{I}(v)|\leq r-2$, since for each $j\in[r]\setminus\mathcal{I}(v)$ there are at most $2$ points in $C_j$ with distance $\sqrt{2\gamma^2}$ from $v$, we have
\[N_3\leq\left(\sum_{\mathcal{J}\in\binom{\mathcal{I}(v)}{k-1}}\prod_{j\in\mathcal{J}}|X\cap C_j|\right)+O(n^{k-2})\leq\binom{r-2}{k-1}\left(\frac{n}{r}\right)^{k-1}+o(n^{k-1}).\]

If $||v||=\gamma>0$ and $|\mathcal{I}(v)|\geq r-1$, then $v$ is not the origin and thus $|\mathcal{I}(v)|=r-1$. Let $j\in[r]$ be such that $\mathcal{I}(v)=[r]\setminus\{j\}$. Then $v\in\mathrm{Aff}(C_j)$ and $||v||=\gamma$, implying that $v\in C_j$, a contradiction.

Therefore, $v$ is contained in at most
\[N_1+N_2+N_3\leq\binom{r-2}{k-1}\left(\frac{n}{r}\right)^{k-1}+o(n^{k-1})\]
regular $(k-1)$-simplices. Now we take an arbitrary point $v'\in C_r\setminus X$. Since $v'$ is contained in at least \[\sum_{\mathcal{J}\in\binom{[r-1]}{k-1}}\prod_{j\in\mathcal{J}}|X\cap C_j|=\binom{r-1}{k-1}\left(\frac{n}{r}\right)^{k-1}-o(n^{k-1})>N_1+N_2+N_3\] regular $(k-1)$-simplices, $X\cup\{v'\}\setminus\{v\}$ spans more regular $(k-1)$-simplices than $X$, contradicting the extremality of $X$.
\end{proof}

We finalize the proof by bounding the number of regular $(k-1)$-simplices in $X$. There are three types of simplices:
\begin{enumerate}
    \item regular $(k-1)$-simplices $\Delta_1$ with $|\Delta_1\cap C_i|\leq1$ for all $i\in[r]$;
    \item regular $(k-1)$-simplices $\Delta_2$ with $|\Delta_2\cap C_i|=2$ for some $i\in[r]$ and $|\Delta_2\cap C_i|\leq2$ for others;
    \item regular $(k-1)$-simplices $\Delta_3$ containing at least three vertices on the same circle.
\end{enumerate}
Note that $\Delta_1$ and $\Delta_2$ both have side length $\sqrt{2\gamma^2}$, while $\Delta_3$, if exists, has side length $\sqrt{3\gamma^2}$. 

Recall that for $k\geq3$
\begin{align*}
f_k(n_1,\dots,n_r):=&\left(\sum_{\mathcal{I}\in\binom{[r]}{k}}\prod_{i\in\mathcal{I}}n_i\right)
+\left(\sum_{1\leq\ell\leq\lfloor{k/2}\rfloor}\sum_{\mathcal{J}\in\binom{[r]}{\ell}}\left(\prod_{j\in\mathcal{J}}(n_j-\mathds{1}_{n_j\notin4\ZZ})\right)\left(\sum_{\mathcal{I}\in\binom{[r]\setminus\mathcal{J}}{k-2\ell}}\prod_{i\in\mathcal{I}}n_i\right)\right)\\
&\quad\quad+\mathds{1}_{k=3}\left(\sum_{i\in[r]}\frac{n_i-p_i}{3}+\mathds{1}_{p_i>8}(p_i-8)\right),
\end{align*}
where $p_i$ is the remainder of $n_i$ divided by $12$ for each $i\in[r]$.

\begin{claim}
\label{even_case}
$X$ spans at most $\max_{n_1+\dots+n_r=n}f_k(n_1,\dots,n_r)$ regular $(k-1)$-simplices.
\end{claim}
\begin{proof}[Proof of~\Cref{even_case}]
For every $i\in[r]$, we let $n_i=|X\cap C_i|$ and $p_i$ be the remainder of $n_i$ divided by $12$. Note that $\sum_{i\in[r]}n_i=n$ and $n_i=n/r-o(n)$ for all $i\in[r]$. The number of $\Delta_1$ is
\[\sum_{\mathcal{I}\in\binom{[r]}{k}}\prod_{i\in\mathcal{I}}n_i.\]

We shall upper bound the number of $\Delta_2$ and $\Delta_3$ together as follows. Observe that the third-type simplex $\Delta_3$ exists only when $k=3$. Indeed, when $k\geq4$, since a circle does not contain a regular $(k-1)$-simplex, there must be another point on a different circle, implying that $\sqrt{2\gamma^2}$ and $\sqrt{3\gamma^2}$ both appear as distances between vertices of $\Delta_3$, a contradiction. For each $i\in[r]$ we let $s_i$ be the number of pairs of points on $C_i$ at distance $\sqrt{2\gamma^2}$ apart and let $t_i$ be the number of equilateral triangles spanned by points on $C_i$. Then for $1\leq\ell\leq\lfloor{k/2}\rfloor$, the number of $\Delta_2$ containing exactly $\ell$ pairs of points, each pair on a different circle, is
\[\sum_{\mathcal{J}\in\binom{[r]}{\ell}}\left(\prod_{j\in\mathcal{J}}s_j\right)\left(\sum_{\mathcal{I}\in\binom{[r]\setminus\mathcal{J}}{k-2\ell}}\prod_{i\in\mathcal{I}}n_i\right).\]
The number of $\Delta_3$, which only exists if $k=3$, is $\mathds{1}_{k=3}\sum_{i\in[r]}t_i$. Hence, the number of $\Delta_2$ and $\Delta_3$ together is
\[\left(\sum_{1\leq\ell\leq\lfloor{k/2}\rfloor}\sum_{\mathcal{J}\in\binom{[r]}{\ell}}\left(\prod_{j\in\mathcal{J}}s_j\right)\left(\sum_{\mathcal{I}\in\binom{[r]\setminus\mathcal{J}}{k-2\ell}}\prod_{i\in\mathcal{I}}n_i\right)\right)+\mathds{1}_{k=3}\left(\sum_{i\in[r]}t_i\right).\]

Note that increasing $s_i$ by $1$ increases the total number of $\Delta_2$ and $\Delta_3$ by $\Omega(n^{k-2})$, while increasing $t_i$ by $1$ only increases the total number of $\Delta_2$ and $\Delta_3$ by at most $1$. In order to maximize the total number of $\Delta_2$ and $\Delta_3$, on each circle $C_i$ the priority is to maximize $s_i$, followed by $t_i$. Observe that $s_i\leq n_i$ and the equality can be attained only if $n_i\in4\mathbb{Z}$. Accordingly, the optimal configuration of $n_i$ points on $C_i$ is as described in~\Cref{lb_even}, where $s_i=n_i-\mathds{1}_{n_i\notin4\ZZ}$ is maximized and $t_i=(n_i-p_i)/3+\mathds{1}_{p_i>8}(p_i-8)$. Therefore, the number of $\Delta_2$ and $\Delta_3$ together is at most
\[\left(\sum_{1\leq\ell\leq\lfloor{k/2}\rfloor}\sum_{\mathcal{J}\in\binom{[r]}{\ell}}\left(\prod_{j\in\mathcal{J}}(n_j-\mathds{1}_{n_j\notin4\ZZ})\right)\left(\sum_{\mathcal{I}\in\binom{[r]\setminus\mathcal{J}}{k-2\ell}}\prod_{i\in\mathcal{I}}n_i\right)\right)+\mathds{1}_{k=3}\left(\sum_{i\in[r]}\frac{n_i-p_i}{3}+\mathds{1}_{p_i>8}(p_i-8)\right).\]

Adding up the contributions from different types, we obtain that there are at most
\begin{align*}
&\left(\sum_{\mathcal{I}\in\binom{[r]}{k}}\prod_{i\in\mathcal{I}}n_i\right)
+\left(\sum_{1\leq\ell\leq\lfloor{k/2}\rfloor}\sum_{\mathcal{J}\in\binom{[r]}{\ell}}\left(\prod_{j\in\mathcal{J}}(n_j-\mathds{1}_{n_j\notin4\ZZ})\right)\left(\sum_{\mathcal{I}\in\binom{[r]\setminus\mathcal{J}}{k-2\ell}}\prod_{i\in\mathcal{I}}n_i\right)\right)\\
&\quad\quad+\mathds{1}_{k=3}\left(\sum_{i\in[r]}\frac{n_i-p_i}{3}+\mathds{1}_{p_i>8}(p_i-8)\right)\\
&\leq\max_{n_1+\dots+n_r=n}f_k(n_1,\dots,n_r).
\end{align*}
regular $(k-1)$-simplices in $X$.
\end{proof}

Due to the extremality of $X$,~\Cref{even_case} gives an upper bound on $S^k_{2r}(n)$ when $n$ is sufficiently large. Since the upper bound matches the lower bound~\eqref{lower_bound_even}, we have that
\begin{equation}
\label{max}
S^k_{2r}(n)=\max_{n_1+\dots+n_r=n}f_k(n_1,\dots,n_r)\quad\text{for fixed $r\geq k\geq 3$ and sufficiently large $n$}.
\end{equation}
The proof of~\Cref{simplex_even} is completed by taking $k\geq4$.
\end{proof}

\subsection{The special case: equilateral triangles}
In this subsection we prove~\Cref{triangle_even}.
\begin{proof}[Proof of~\Cref{triangle_even}]
As shown in the proof of~\Cref{simplex_even}, see~\eqref{max}, for sufficiently large $n$ we have $T_{2r}(n)=S^3_{2r}(n)=\max_{n_1+\dots+n_r=n}f_3(n_1,\dots,n_r)$, where
\[f_3(n_1,\dots,n_r)=\left(\sum_{1\leq i<j<k\leq r}n_in_jn_k\right)+\left(\sum_{i\in[r]}(n_i-\mathds{1}_{n_i\notin4\ZZ})(n-n_i)+\frac{n_i-p_i}{3}+\mathds{1}_{p_i>8}(p_i-8)\right)\]
and $p_i$ denotes the remainder of $n_i$ divided by $12$. Thus, to prove~\Cref{triangle_even}, it suffices to find the maximizer of $f_3(n_1,\dots,n_r)$ among all $(n_1,\dots,n_r)$ with $\sum_{i=1}^rn_i=n$.

Let $(n_1,\dots,n_r)$ be a maximizer of $f_3(n_1,\dots,n_r)$ among all $(n_1,\dots,n_r)$ with $\sum_{i=1}^rn_i=n$. Since $S^3_{2r}(n)=\binom{r}{3}(n/r)^3+o(n^3)$ by~\Cref{simplex}, we have that $|n_i-n/r|=o(n)$ for all $i\in[r]$. Otherwise by~\Cref{parts}, the first term of $f_3(n_1,\dots,n_r)$ would be at most $\binom{r}{3}(n/r)^3-\eps n^3$ for some constant $\eps>0$, yielding that $f_3(n_1,\dots,n_r)\leq\binom{r}{3}(n/r)^3-\eps n^3+O(n^2)<S^3_{2r}(n)$, a contradiction.

\begin{claim}
\label{claim1}
For any distinct $i,j\in[r]$, $|n_i-n_j|\leq2$.
\end{claim}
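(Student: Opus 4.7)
The plan is to argue by contradiction via a local swap. After sorting $n_1 \geq n_2 \geq \dots \geq n_r$, suppose for contradiction that $n_1 - n_r \geq 3$, and write $d = n_1 - n_r$ and $s = n_1 + n_r$. I would first apply the one-step swap $\sigma:(n_1, n_r) \mapsto (n_1 - 1, n_r + 1)$, leaving the other $n_i$ fixed, and try to prove that $\Delta f_3 := f_3(\sigma) - f_3(n_1, \dots, n_r) > 0$, contradicting the maximality of $(n_1, \dots, n_r)$. A direct computation gives $\Delta e_3 = (d - 1)(n - s)$, and the quadratic part of the middle sum, namely $n^2 - \sum_i n_i^2$, gains $2(d - 1)$; these are the two leading positive contributions. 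The remaining part involves the indicator sum $\sum_i \mathds{1}_{n_i \notin 4\ZZ}(n - n_i)$, whose change $\Delta_I$ depends only on $(n_1, n_r) \bmod 4$, while the contribution from the third and fourth summands depends on residues modulo $12$ and is $O(1)$. The key arithmetic observation is that the extremal ``worst'' residue pair $(0, 0) \bmod 4$---in which both indicators flip from $0$ to $1$, producing $\Delta_I = 2n - s$---forces $d \equiv 0 \pmod 4$.

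For $r \geq 4$, the leading gain $\Delta e_3 \geq 2(n - s) = \Omega(n)$ strictly dominates any $O(n)$ correction, and $\Delta f_3 > 0$ follows from a coarse case analysis on $(n_1, n_r) \bmod 4$. The delicate case is $r = 3$, where $\Delta e_3$ and $\Delta_I$ have the same order. When $d = 3$ the residue $(0, 0)$ is arithmetically excluded, and a direct check of the four admissible residue pairs yields that the change in $e_3 + $ (quadratic part of middle sum) $-\Delta_I$ is at least $1$, with the minimum value $1$ attained at $(n_1, n_r) \equiv (3, 0) \pmod 4$ (for the partition $(\lfloor n/3 \rfloor + 2, \lfloor n/3 \rfloor - 1, \lfloor n/3 \rfloor - 1)$). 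In this tightest subcase, the change in the lower-order $\bmod\,12$ terms vanishes: generically the consecutive values of $p_i = n_i \bmod 12$ contribute canceling terms, the net change from the threshold $p_i > 8$ at the two endpoints cancels, and the residue constraints $n_1 \equiv 3 \pmod 4$ and $n_r \equiv 0 \pmod 4$ rule out the wrap-around exceptions $n_1 \equiv 0 \pmod{12}$ or $n_r \equiv 11 \pmod{12}$.

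The main obstacle, and the one remaining subcase, is $r = 3$ with $d \geq 4$ and $(n_1, n_r) \equiv (0, 0) \pmod 4$, where $\sigma$ fails. Here I would instead apply the two-step swap $\tau:(n_1, n_r) \mapsto (n_1 - 2, n_r + 2)$. Under $\tau$, both $n_1 - 2$ and $n_r + 2$ land in residue $2 \bmod 4$, so the indicator change is again $2n - s$, while $\Delta e_3 = 2(d - 2)(n - s)$ and the quadratic gain becomes $4(d - 2)$. A brief computation yields
\[
\Delta e_3 + 4(d-2) - (2n - s) \;=\; (2d - 6)\,n \;-\; (2d - 5)\,s \;+\; 4(d - 2),
\]
which is strictly positive for every $d \geq 4$ once we invoke $s = 2n/3 + o(n)$ (valid for $r = 3$, as established earlier in the proof); for $d = 4$ it reduces to $2n - 3s + 8 \geq 8 - o(n)$. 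Verifying that the change in the remaining $\bmod\,12$ terms is nonnegative in this range is a short finite check. Combining all cases gives $\Delta f_3 > 0$, contradicting maximality and completing the proof.
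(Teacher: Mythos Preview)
The paper takes a much more direct route: it applies the two-step swap $(n_1,n_2)\mapsto(n_1-2,n_2+2)$ from the outset in every case (not just in one subcase), replaces the indicator terms by the crude one-sided bounds $n_i'-\mathds{1}_{n_i'\notin4\ZZ}\ge n_i'-1$ and $n_i-\mathds{1}_{n_i\notin4\ZZ}\le n_i$, and bounds the total mod-$12$ change by $-2$. This collapses the whole argument to a single algebraic inequality with no residue case analysis at all. Your plan---a one-step swap with residue tracking, falling back to a two-step swap only for $r=3$ with $(n_1,n_r)\equiv(0,0)\pmod4$---is a genuinely different and considerably more laborious route.

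Your outline can be pushed through, but as written it has real gaps. The most serious is in the two-step subcase $r=3$, $d=4$: you conclude positivity from ``$2n-3s+8\ge 8-o(n)$,'' which proves nothing since the $o(n)$ term may exceed $8$. What actually holds is the exact identity $2n-3s=2n_2-n_1-n_3=2(n_2-n_3)-d$, so for $d=4$ one gets $2n-3s+8=2(n_2-n_3)+4\ge4$; that is the computation you need. A parallel issue hits your $r\ge4$ paragraph: the assertion that $\Delta e_3$ ``strictly dominates any $O(n)$ correction'' is false for $r=4$, $d=4$, $(n_1,n_4)\equiv(0,0)\pmod4$, where the cubic-plus-quadratic gain minus the indicator loss is $n-2s+6=(n_2+n_3)-(n_1+n_4)+6\in[2,10]$---an $O(1)$ margin, not $\Omega(n)$. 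Finally, your claim that the mod-$12$ change under the two-step swap is nonnegative fails for $d=8$ (e.g.\ $(p_1,p_r)=(0,4)$ gives $-2$); it is zero for $d=4$ thanks to the constraint $p_1\equiv p_r+4\pmod{12}$, but for larger $d\equiv0\pmod4$ you must instead absorb the $-2$ into the $\Omega(n)$ main term.
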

\begin{proof}[Proof of~\Cref{claim1}]
Suppose for contradiction that $n_1-n_2\geq3$. Let $n_1'=n_1-2$ and $n_2'=n_2+2$. Then we have
\begin{align*}
&f_3(n_1',n_2',n_3,\dots,n_r)-f_3(n_1,n_2,n_3,\dots,n_r)\\
&\geq\left(\sum_{k=3}^{r}(n_1'n_2'-n_1n_2)n_k\right)+\left((n_1'-1)(n-n_1')-n_1(n-n_1)\right)\\
&\quad\quad+\left((n_2'-1)(n-n_2')-n_2(n-n_2)\right)-30\\
&=(2n_1-2n_2-4)(n-n_1-n_2)+(-3n+5n_1-6)+(n-3n_2-2)-30\\
&=(2n-2n_1+9)n_1-(2n-2n_2-1)n_2-6n-38.
\end{align*}
Since $n_2\leq n_1-3\leq n/2-1$ and $(2n-2n_2-1)n_2$ is increasing in $n_2$ for $n_2\leq n/2-1$, it holds that $(2n-2n_2-1)n_2\leq(2n-2n_1-7)(n_1-3)$ and thus
\[f_3(n_1',n_2',n_3,\dots,n_r)-f_3(n_1,n_2,n_3,\dots,n_r)\geq10n_1-59>0,\]
contradicting the maximality of $f_3(n_1,\dots,n_r)$.
\end{proof}

\begin{claim}
\label{claim2}
For any distinct $i,j\in[r]$, if $|n_i-n_j|=2$, then $n_i,n_j\in2\ZZ$.
\end{claim}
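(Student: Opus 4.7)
The plan is to argue by contradiction. Suppose $n_1 = n_2 + 2$ with both $n_1, n_2$ odd (relabeling if necessary). I will consider the swap $(n_1, n_2) \mapsto (n_1', n_2') := (n_1 - 1, n_2 + 1)$, which preserves $\sum_i n_i = n$ and replaces two odd entries by two even entries, and then show that this swap strictly increases $f_3$, contradicting the maximality of $(n_1, \ldots, n_r)$.

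First, I will track the change in the leading triple-product term. Since $(n_1-1)(n_2+1) - n_1 n_2 = n_1 - n_2 - 1 = 1$, the change in $\sum_{i<j<k} n_i n_j n_k$ equals $\sum_{k \geq 3} n_k = n - n_1 - n_2$. Because $n_i = n/r + o(n)$ for every $i$ and $r \geq 3$, this gain is $\Theta(n)$ and in particular grows without bound as $n$ increases.

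Second, I will compute the change in the pair-contribution term $\sum_i (n_i - \mathds{1}_{n_i \notin 4\ZZ})(n - n_i)$. Only the $i = 1, 2$ summands are affected. Whether each indicator flips depends on $n_1 \bmod 4$: if $n_1 \equiv 1 \pmod{4}$ then $n_1', n_2'$ are both divisible by $4$ and both indicators vanish, whereas if $n_1 \equiv 3 \pmod{4}$ then $n_1', n_2' \equiv 2 \pmod{4}$ and both indicators remain $1$. A short expansion in each sub-case shows that this change is strictly positive: approximately $2(n - n_2) = \Theta(n)$ in the first sub-case and exactly $2$ in the second.

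Finally, the auxiliary contributions $\sum_i (n_i - p_i)/3$ and $\sum_i \mathds{1}_{p_i > 8}(p_i - 8)$ depend only on $n_i \bmod 12$ and hence change by at most an $O(1)$ constant under a $\pm 1$ swap. Combining all three contributions yields $\Delta f_3 \geq (n - n_1 - n_2) + 2 - O(1) > 0$ for $n$ sufficiently large, contradicting the maximality of $(n_1, \ldots, n_r)$. The only genuinely fiddly step will be the two-case expansion of the pair-contribution term; once those two formulas are written out, the rest is straightforward bookkeeping.
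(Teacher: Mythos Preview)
Your proposal is correct and follows essentially the same approach as the paper: the same swap $(n_1,n_2)\mapsto(n_1-1,n_2+1)$, the same decomposition of $f_3$ into the triple-product term, the pair-contribution term, and the $O(1)$ residue-class term. The only difference is that the paper handles the pair-contribution term in a single line by taking the pessimistic lower bound $(n_i'-1)(n-n_i')$ regardless of $n_1'\bmod 4$ (since the original indicators are both $1$, this already suffices), whereas you case-split on $n_1\bmod 4$; both routes yield the same conclusion.
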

\begin{proof}[Proof of~\Cref{claim2}]
Suppose that $n_1=n_2+2$ and $n_1,n_2\notin2\ZZ$. Let $n_1'=n_1-1$ and $n_2'=n_2+1$. Since $n_1,n_2\notin4\ZZ$, we have
\begin{align*}
&f_3(n_1',n_2',n_3,\dots,n_r)-f_3(n_1,n_2,n_3,\dots,n_r)\\
&\geq\left(\sum_{k=3}^{r}(n_1'n_2'-n_1n_2)n_k\right)+\left((n_1'-1)(n-n_1')-(n_1-1)(n-n_1)\right)\\
&\quad\quad+\left((n_2'-1)(n-n_2')-(n_2-1)(n-n_2)\right)-30\\
&=(n-n_1-n_2)+(-n+2n_1-2)+(n-2n_2)-30=n-2n_2-30>0,
\end{align*}
a contradiction.
\end{proof}

\begin{claim}
\label{claim3}
There exists at most one $i\in[r]$, such that $n_i$ is odd.
\end{claim}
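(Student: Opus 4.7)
The plan is to argue by contradiction via a $\pm 1$ swap, pushing two putative odd values apart by one unit in opposite directions. Suppose towards a contradiction that at least two of the $n_i$'s, say $n_1$ and $n_2$, are odd. First I would combine Claims~\ref{claim1} and~\ref{claim2} to pin down the structure: by Claim~\ref{claim1} we have $|n_1 - n_2| \le 2$; this difference cannot be $2$, since Claim~\ref{claim2} would then force both to be even, and it cannot be $1$, since two integers of the same parity differ by an even number. Hence $n_1 = n_2 = m$ for some odd $m$, and since $n_i = n/r + o(n)$ for every $i$, we have $m \to \infty$ with $n$.

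The swap replaces $(n_1, n_2)$ by $(n_1', n_2') = (m-1, m+1)$, so both new values are even and exactly one of them is divisible by $4$. I would then show
\[
\Delta f_3 := f_3(n_1', n_2', n_3, \dots, n_r) - f_3(n_1, n_2, n_3, \dots, n_r) > 0,
\]
contradicting the maximality of $(n_1, \dots, n_r)$. Writing $f_3 = T_1 + T_2 + T_3$ for the three summands in its definition, the pieces are computed as follows. Since $n_1 + n_2$ is preserved, only the products $n_1 n_2 n_k$ with $k \ge 3$ in $T_1$ are affected, and $(m-1)(m+1) - m^2 = -1$ gives $\Delta T_1 = -(n - 2m) = 2m - n$. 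For $T_2$, only the $i \in \{1,2\}$ terms change; a direct expansion exploiting that exactly one of $m-1, m+1$ lies in $4\ZZ$ produces $\Delta T_2 = (n - m) - c$ with $c \in \{1, 3\}$ depending on which of $m \pm 1$ is divisible by $4$. For $T_3$, a case analysis over $m \bmod 12 \in \{1, 3, 5, 7, 9, 11\}$ shows that the change in $\sum_{i \in \{1,2\}} (n_i - p_i)/3$ is exactly cancelled by the change in $\sum_{i \in \{1,2\}} \mathds{1}_{p_i > 8}(p_i - 8)$, so $\Delta T_3 = 0$ in every case. Summing, $\Delta f_3 = m - c \ge m - 3 > 0$ for $n$ sufficiently large, the desired contradiction.

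The main technical step I anticipate needing care is the cancellation in $\Delta T_3$, because the piecewise nature of $p_i$ and of $\mathds{1}_{p_i > 8}$ makes the $T_3$ contribution nonmonotonic in $m$. The delicate case is $m \equiv 11 \pmod{12}$: here both residues shift simultaneously, $(m-1) \bmod 12 = 10$ and $(m+1) \bmod 12 = 0$, so the $(n_i - p_i)/3$ part jumps up by $4$ while $\mathds{1}_{p_i > 8}(p_i - 8)$ drops by $4$, leaving $\Delta T_3 = 0$. In the remaining odd residue classes both residues shift by exactly $\pm 1$ and neither piece changes. Once this finite check is done, the linear gain of order $m$ from $\Delta T_1 + \Delta T_2$ dominates the bounded $\Delta T_3$ and closes the argument.
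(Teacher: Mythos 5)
Your proposal is correct and follows essentially the same route as the paper: assume two odd parts, deduce $n_1=n_2$ from Claims~\ref{claim1} and~\ref{claim2}, perform the $(m,m)\to(m-1,m+1)$ swap, and split into the two cases $m\equiv1$ and $m\equiv3\pmod 4$ according to which of $m\pm1$ lands in $4\ZZ$. The only (immaterial) difference is that you compute the change in the dodecagon term exactly as $0$ via a residue check mod $12$, whereas the paper simply lower-bounds that change by $-1$; both yield a net gain of order $m$ and the same contradiction.
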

\begin{proof}[Proof of~\Cref{claim3}]
Suppose that $n_1$ and $n_2$ are both odd. Then by~\Cref{claim1} and~\Cref{claim2} we have that $n_1=n_2$. Let $n_1'=n_1-1$ and $n_2'=n_2+1$. If $n_1=n_2\equiv1\pmod{4}$, then we have $n_1'\equiv0\pmod{4}$ and thus
\begin{align*}
&f_3(n_1',n_2',n_3,\dots,n_r)-f_3(n_1,n_2,n_3,\dots,n_r)\\
&\geq\left(\sum_{k=3}^{r}(n_1'n_2'-n_1n_2)n_k\right)+\left(n_1'(n-n_1')-(n_1-1)(n-n_1)\right)\\
&\quad\quad+\left((n_2'-1)(n-n_2')-(n_2-1)(n-n_2)\right)-30\\
&=-(n-n_1-n_2)+(n_1-1)+(n-2n_2)-30=2n_1-n_2-31>0,
\end{align*}
a contradiction. If $n_1=n_2\equiv3\pmod{4}$, then we have $n_2'\in4\ZZ$ and thus
\begin{align*}
&f_3(n_1',n_2',n_3,\dots,n_r)-f_3(n_1,n_2,n_3,\dots,n_r)\\
&\geq\left(\sum_{k=3}^{r}(n_1'n_2'-n_1n_2)n_k\right)+\left((n_1'-1)(n-n_1')-(n_1-1)(n-n_1)\right)\\
&\quad\quad+\left(n_2'(n-n_2')-(n_2-1)(n-n_2)\right)-30\\
&=-(n-n_1-n_2)+(-n+2n_1-2)+(2n-3n_2-1)-30=3n_1-2n_2-33>0,
\end{align*}
a contradiction.
\end{proof}

Combining~\Cref{claim1},~\Cref{claim2}, and~\Cref{claim3}, we know that
\begin{itemize}
    \item if $n$ is even, then $n_1,\dots,n_r$ are all even and $|n_i-n_j|\leq2$ for all $i,j\in[n]$;
    \item if $n$ is odd, then there is exactly one odd $n_i$ and $|n_i-n_j|=1$ for all $j\in[r]\setminus\{i\}$. 
\end{itemize}
Such a partition of $n$ is unique up to the ordering of $n_1,\dots,n_r$, thus, $(n_1,\dots,n_r)$ is exactly the one given in~\Cref{triangle_even}, up to a reordering.
\end{proof}

\subsection{The lower-order term in $S^k_{2r}(n)$}
In this subsection we prove~\Cref{error_term}, which determines the magnitude of the lower-order term in the asymptotic expression of $S^k_{2r}(n)$.
Recall that for any fixed integers $r\geq k\geq 3$,~\Cref{error_term} asserts that
\[S^k_{2r}(n)=\binom{r}{k}\left(\frac{n}{r}\right)^k+\Theta(n^{k-1}).\]
\begin{proof}[Proof of~\Cref{error_term}]
We have shown in~\eqref{max} that
\[S^k_{2r}(n)=\max_{n_1+\dots+n_r=n}f_k(n_1,\dots,n_r)\quad\text{for fixed $r\geq k\geq 3$ and sufficiently large $n$},\]
where
\[f_k(n_1,\dots,n_r)=\left(\sum_{\mathcal{I}\in\binom{[r]}{k}}\prod_{i\in\mathcal{I}}n_i\right)
+\left(\sum_{j\in[r]}\sum_{\mathcal{I}\in\binom{[r]\setminus\{j\}}{k-2}}(n_j-\mathds{1}_{n_j\notin4\ZZ})\prod_{i\in\mathcal{I}}n_i\right)+O(n^{k-2}).\]
Ignoring the last term above, which has a negligible effect for sufficiently large $n$, it suffices to show that for fixed integers $r\geq k\geq 3$
\begin{equation}
\label{error}
\max_{n_1+\dots+n_r=n}\left(\sum_{\mathcal{I}\in\binom{[r]}{k}}\prod_{i\in\mathcal{I}}n_i\right)
+\left(\sum_{j\in[r]}\sum_{\mathcal{I}\in\binom{[r]\setminus\{j\}}{k-2}}(n_j-\mathds{1}_{n_j\notin4\ZZ})\prod_{i\in\mathcal{I}}n_i\right)=\binom{r}{k}\left(\frac{n}{r}\right)^k+\Theta(n^{k-1}).
\end{equation}
The first term on the left-hand side of~\eqref{error} is at most $\binom{r}{k}(n/r)^k$ by Maclaurin's inequality~\cite{BP03}, while the second term is $O(n^{k-1})$. So the left-hand side of~\eqref{error} is upper bounded by $\binom{r}{k}(n/r)^k+O(n^{k-1})$.

For the other direction, we let $n_i$ be either $\lceil{n/r}\rceil$ or $\lfloor{n/r}\rfloor$ for each $i\in[r]$ so that $n_1+\dots+n_r=n$. Let $m^+$ and $m^-$ denote the counts of indices $i$ for which $n_i=\lceil{n/r}\rceil$ and $n_i=\lfloor{n/r}\rfloor$, respectively. Note that $\alpha:=m^+(\lceil{n/r}\rceil-n/r)+m^-(\lfloor{n/r}\rfloor-n/r)=0$. For this choice of $n_1,\dots,n_r$, the second term on the left-hand side of~\eqref{error} is $\Omega(n^{k-1})$, while the first term is at least
\[\binom{r}{k}\left(\frac{n}{r}\right)^k+\alpha\binom{r-1}{k-1}\left(\frac{n}{r}\right)^{k-1}-O(n^{k-2})=\binom{r}{k}\left(\frac{n}{r}\right)^k-O(n^{k-2}).\]
Therefore, the left-hand side of~\eqref{error} is lower bounded by $\binom{r}{k}(n/r)^k+\Omega(n^{k-1})$.
\end{proof}

\section{Concluding remarks}
\label{sec:concluding}
In this paper, we settled a conjecture in combinatorial geometry. Assuming $n$ being sufficiently large, we obtained the exact value of $T_d(n)$, that is, the maximum number of equilateral triangles spanned by $n$ points in $\RR^d$, for all even dimensions $d\geq6$. For odd $d\geq7$, we determined $T_d(n)$ up to the lower-order term.

Let $T_d^{\rm unit}(n)$ be the maximum number of unit equilateral triangles spanned by $n$ points in $\RR^d$. Our method also determines $T_d^{\rm unit}(n)$ for all even $d\geq6$.
\begin{proposition}
\label{unit_triangle}
Let $r\geq3$ be a fixed integer. For sufficiently large $n\in\NN$ we have
\[T_{2r}^{\rm unit}(n)=\sum_{1\leq i<j<k\leq r}n_in_jn_k+\sum_{i\in[r]}\left((n_i-\mathds{1}_{n_i\notin4\ZZ})(n-n_i)\right),\]
where $(n_1,\dots,n_r)$ is chosen in the same manner as in~\Cref{triangle_even}.
\end{proposition}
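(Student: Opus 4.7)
The plan is to adapt the proof of~\Cref{triangle_even} by restricting attention to triangles of side length exactly $1$. For the lower bound, I would rescale the Lenz construction of~\Cref{sec:construction} so that each of the pairwise orthogonal circles $C_1,\dots,C_r$ has radius $1/\sqrt{2}$. The cross-circle distance then becomes $\sqrt{2}\cdot 1/\sqrt{2}=1$, so every triangle of type $\Delta_1$ or $\Delta_2$ counted in~\Cref{sec:construction} is a unit triangle; the inscribed within-circle triangles $\Delta_3$ have side $\sqrt{3/2}\neq 1$ and drop out. This yields $T_{2r}^{unit}(n)\geq g(n_1,\dots,n_r)$, where
\[
g(n_1,\dots,n_r):=\sum_{1\leq i<j<k\leq r}n_in_jn_k+\sum_{i\in[r]}(n_i-\mathds{1}_{n_i\notin 4\ZZ})(n-n_i).
\]

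For the upper bound, let $X\subseteq\RR^{2r}$ attain $T_{2r}^{unit}(n)$. Since every unit equilateral triangle is a regular $2$-simplex, $X$ spans at least $T_{2r}^{unit}(n)\geq\binom{r}{3}(n/r)^3+\Theta(n^2)=S^3_{2r}(n)-o(n^3)$ regular $2$-simplices, placing $X$ in the stability regime. I would invoke~\Cref{simplex_stability} to obtain a partition $X=A_0\dot\cup A_1\dot\cup\cdots\dot\cup A_r$ and pairwise orthogonal circles $C_1,\dots,C_r$ of common center and common radius $\gamma$. I would first pin down $\gamma=1/\sqrt{2}$: cross-circle distances are $\sqrt{2}\gamma$ and within-circle equilateral triangles have side $\sqrt{3}\gamma$, so if $\gamma\neq 1/\sqrt{2}$ then all unit triangles among the non-$A_0$ points either vanish (generic $\gamma$) or number $\sum_i\lfloor n_i/3\rfloor=O(n)$ (if $\gamma=1/\sqrt{3}$); together with the $o(n^3)$ unit triangles touching $A_0$, this contradicts $T_{2r}^{unit}(n)=\Theta(n^3)$. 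The swapping argument of~\Cref{ellig} then goes through for unit triangles, since the bound on unit triangles through any $v\in A_0$ is at most that on regular triangles through $v$ (namely $\binom{r-2}{2}(n/r)^2+o(n^2)$), while any $v'\in C_r\setminus X$ participates in at least $\binom{r-1}{2}(n/r)^2-o(n^2)$ unit triangles formed across the $r-1$ other circles. Hence $A_0=\emptyset$, and repeating the counting of~\Cref{even_case} with the $\Delta_3$ term omitted (since $\sqrt{3}\gamma\neq 1$) yields $T_{2r}^{unit}(n)\leq g(n_1,\dots,n_r)$ with $n_i:=|X\cap C_i|$.

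To identify the maximizer, I would repeat the three claims~\Cref{claim1,claim2,claim3} from the proof of~\Cref{triangle_even} with $f_3$ replaced by $g$. The ``$-2$'' and ``$-1$'' slacks in the displayed inequalities there were present precisely to absorb the bounded variation of the $\Delta_3$ contribution under $\pm 1,\pm 2$ adjustments of a single $n_i$; removing that contribution only strengthens each inequality, so the same three structural constraints ($|n_i-n_j|\leq 2$, evenness when $|n_i-n_j|=2$, and at most one odd coordinate) persist. Consequently the maximizer of $g$ is, up to reordering, exactly the tuple $(n_1,\dots,n_r)$ prescribed in~\Cref{triangle_even}, completing the proof.

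The main obstacle I anticipate is the radius step: one must carefully exclude that an extremal unit-triangle configuration exploits a different scale, in particular $\gamma=1/\sqrt{3}$, where within-circle inscribed triangles are unit. This is handled by the counting sketched above, which shows that such a configuration contributes only $O(n)$ unit triangles, far too few to match the $\Theta(n^3)$ lower bound. Everything else is a direct transcription of the arguments in~\Cref{sec:stability,sec:number}, with the single line-length change $\gamma=1\mapsto\gamma=1/\sqrt{2}$ propagated throughout.
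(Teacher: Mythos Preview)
Your proposal is correct and follows essentially the same route the paper sketches: rerun the Lenz lower bound and the stability/swapping/counting upper bound with the $\Delta_3$ term removed, then repeat~\Cref{claim1,claim2,claim3} for the truncated objective $g$. Your explicit radius-pinning step ($\gamma=1/\sqrt{2}$, ruling out $\gamma=1/\sqrt{3}$ by the $O(n)$ count) is a detail the paper's one-line remark glosses over but which is indeed needed, and your observation that dropping the bounded $\Delta_3$ slack only strengthens the three claim inequalities is the right way to see the maximizer is unchanged.
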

The upper bound in~\Cref{unit_triangle} follows by an argument analogous to that of~\Cref{triangle_even,simplex_even}, except that the third-type simplices (triangles) need not be counted, as their side length differs from that of the first two types. The lower bound is obtained by the same Lenz construction as in~\Cref{lb_even}, again excluding the third-type triangles.

We also considered a more general setting. Recall that $S^k_d(n)$ denotes the maximum number of regular $(k-1)$-simplices spanned by $n$ points in $\RR^d$. We determined the asymptotic behavior of $S^k_d(n)$ for all fixed $d\geq2k\geq6$. For even $d$, we are also able to determine the magnitude of the lower-order term and reduce the problem of determining the exact value of $S^k_d(n)$ to an optimization problem. We remark that in the case when $k\geq4$ and $d$ is even, since the third-type simplices do not exist, our proof of~\Cref{simplex_even} also implies the following.
\begin{proposition}
\label{given_side_length}
For any fixed $r\geq k\geq4$ and sufficiently large $n\in\NN$, the maximum number of $(k-1)$-simplices spanned by $n$ points in $\RR^{2r}$, with a given side length, is equal to $S^k_{2r}(n)$.
\end{proposition}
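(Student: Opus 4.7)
The plan is to establish both $U \le S^k_d(n)$ and $U \ge S^k_d(n)$, where $U$ denotes the maximum number of regular $(k-1)$-simplices of a prescribed positive side length $\lambda$ spanned by $n$ points in $\RR^d$. The upper bound $U \le S^k_d(n)$ is immediate, since imposing a side-length constraint can only restrict the set of counted simplices.

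For the matching lower bound, I would invoke the Lenz construction from~\Cref{sec:construction}, scaled so that the pairwise orthogonal spheres $C_1,\dots,C_r$ all have radius $\gamma = \lambda/\sqrt{2}$ about a common center. The resulting point set realizes the right-hand sides of~\eqref{lower_bound_even} and~\eqref{lower_bound_odd}, which by~\Cref{simplex_even} and~\Cref{simplex_odd} coincide with $S^k_d(n)$. The decisive observation is that for $k\ge 4$ each simplex produced has side length exactly $\sqrt{2\gamma^2}=\lambda$: inspecting the three types $\Delta_1$, $\Delta_2$, $\Delta_3$ from the proofs of~\Cref{even_case} and~\Cref{odd_case}, the first two have side $\sqrt{2}\gamma$ by construction (cross-sphere distances are $\sqrt{2}\gamma$, and the ``good pairs'' within a sphere are placed precisely at distance $\sqrt{2}\gamma$), while $\Delta_3$ cannot exist when $k\ge 4$, because three cocircular equidistant vertices on a unit circle must form an inscribed equilateral triangle of side $\sqrt{3}\gamma$, and no fourth vertex of a regular $(k-1)$-simplex at that distance is available---any vertex on another orthogonal sphere lies at distance $\sqrt{2}\gamma\ne\sqrt{3}\gamma$ from it, and no fourth cocircular point equidistant from the three exists.

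The step I expect to demand a little extra care is the odd-dimensional case, where $C_r$ is a $2$-sphere rather than a circle. There one could a priori worry about regular sub-simplices lying entirely inside $C_r$ of a side length different from $\lambda$ (say, a regular tetrahedron of side $2\gamma\sqrt{6}/3$, by the circumradius identity); however, such simplices simply do not contribute to $U$ and pose no obstruction to the lower bound, while for $k\ge 5$ they cannot exist at all since a $2$-sphere cannot host $k$ equidistant points. For the $\Omega(n^{k-2/3})$ contribution coming from ``good pairs'' on $C_r$, the argument is identical to the even case once one notes that any vertex lying on one of $C_1,\dots,C_{r-1}$ is at distance exactly $\sqrt{2}\gamma$ from every vertex on $C_r$, so those simplices are automatically of side $\lambda$. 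Granted these short geometric checks, the proposition becomes an immediate corollary of~\Cref{simplex_even} and~\Cref{simplex_odd}, the only new ingredient being the observation that the extremal Lenz configurations are side-length homogeneous whenever $k\ge 4$.
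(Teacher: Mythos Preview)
Your argument works cleanly for even $d$, where~\Cref{simplex_even} certifies that the Lenz construction achieves exactly $\max_{n_1+\dots+n_r=n} f_k(n_1,\dots,n_r)=S^k_{2r}(n)$, and all its simplices have side $\sqrt{2}\gamma$. The odd-dimensional case, however, has a genuine gap. You assert that the Lenz construction ``realizes the right-hand side of~\eqref{lower_bound_odd}, which by~\Cref{simplex_odd} coincides with $S^k_d(n)$.'' But~\eqref{lower_bound_odd} is only an $\Omega$-lower bound and~\Cref{simplex_odd} is only a $\Theta$-statement; neither pins down the constant in the $n^{k-2/3}$ term, so the paper nowhere establishes that the specific Lenz configuration is \emph{exactly} extremal for $S^k_{2r+1}(n)$. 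Consequently your lower-bound step yields only $U\ge \binom{r}{k}(n/r)^k+\Omega(n^{k-2/3})$, not $U\ge S^k_{2r+1}(n)$, and the exact equality in the proposition does not follow.

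The paper's route avoids this by working from the other end: take an \emph{arbitrary} extremal set $X$ attaining $S^k_d(n)$. The proof of~\Cref{simplex_even,simplex_odd} (specifically~\Cref{ellig}) forces $X\subseteq\bigcup_{i\in[r]}C_i$, and then the classification into $\Delta_1,\Delta_2,\Delta_3$ shows---since $\Delta_3$ is absent for $k\ge 4$---that every regular $(k-1)$-simplex in $X$ has side length $\sqrt{2}\gamma$. Thus the extremal configuration itself witnesses $U(\sqrt{2}\gamma)\ge S^k_d(n)$, and scaling handles an arbitrary prescribed side length. The key point is that one should appeal to the structural conclusion about the extremal set, not to the explicit Lenz construction, to get the exact lower bound in the odd case.
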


Regarding the function $S^k_{2r+1}(n)$ with $r\geq k\geq3$, it seems out of reach to determine its exact value even for $r=k=3$. The reason is that in the odd-dimensional case a linear proportion of the points can lie on a $2$-dimensional sphere, and the exact maximum number of occurrences of a fixed distance among points on a $2$-dimensional sphere is unknown. Nevertheless, we believe that determining the magnitude of the lower-order term is plausible. As mentioned in~\Cref{lb_odd}, a result of Erd\H{o}s, Hickerson, and Pach~\cite{EHP89} shows that there exists a configuration of $m$ points on a unit $2$-dimensional sphere in which the distance $\sqrt{2}$ occurs $\Omega(m^{4/3})$ times. Building on this configuration, we obtained the lower bound $S^k_{2r+1}(n)\geq \binom{r}{k}\left(\frac{n}{r}\right)^k+\Omega(n^{k-2/3})$, see~\eqref{lower_bound_odd}. On the other hand, a classical result of Clarkson, Edelsbrunner, Guibas, Sharir, and Welzl~\cite{CEGSW90} states that the number of occurrences of any fixed distance among $m$ points on a $2$-dimensional sphere is at most $O(m^{4/3})$. Using this bound, together with a further refinement of our method, it is possible to prove that
\[S^k_{2r+1}(n)=\binom{r}{k}\left(\frac{n}{r}\right)^k+\Theta(n^{k-2/3})\]
for $r\geq k\geq3$. However, we expect the analysis to be more involved, since most of the dimension arguments underlying the stability result (i.e.,~\Cref{simplex_stability}) no longer hold in the odd-dimensional case.

\section*{Acknowledgements}
The first author is partially supported by a PIMS Postdoctoral Fellowship (PIMS-20250726-PDF). The authors thank the anonymous referee for pointing out  errors in earlier versions of the manuscript and for many constructive comments that significantly improved the presentation.

\end{document}